\let\pa\partial
\let\na\nabla
\let\eps\varepsilon
\newcommand{\N}{{\mathbb N}}
\newcommand{\R}{{\mathbb R}}
\newcommand{\diver}{\operatorname{div}}
\numberwithin{equation}{section}
\newtheorem{theorem}{Theorem}[section]
\newtheorem{definition}[theorem]{Definition}
\newtheorem{lemma}[theorem]{Lemma}
\newtheorem{proposition}[theorem]{Proposition}
\begin{document}

\title[Cahn--Hilliard cross-diffusion systems]{Existence of global weak solutions \\ to a Cahn--Hilliard cross-diffusion system \\ in lymphangiogenesis}

\author[A. J\"ungel]{Ansgar J\"ungel}
\address{Institute of Analysis and Scientific Computing, Technische Universit\"at, Wiedner Hauptstra\ss e 8--10, 1040 Wien, Austria}
\email{juengel@tuwien.ac.at}

\author[Y. Li]{Yue Li}
\address{Institute of Analysis and Scientific Computing, Technische Universit\"at, Wiedner Hauptstra\ss e 8-10, 1040 Wien, Austria}
\email{yue.li@asc.tuwien.ac.at}

\date{\today}

\thanks{The authors acknowledge partial support from
the Austrian Science Fund (FWF), grants P33010 and F65. This work has received funding from the European Research Council (ERC) under the European Union's Horizon 2020 research and innovation programme, ERC Advanced Grant no.~101018153.}

\begin{abstract}
The global-in-time existence of weak solutions to a degenerate Cahn--Hilliard cross-diffusion system with singular potential in a bounded domain with no-flux boundary conditions is proved. The model consists of two coupled parabolic fourth-order partial differential equations and describes the evolution of the fiber phase volume fraction and the solute concentration, modeling the pre-patterning of lymphatic vessel morphology. The fiber phase fraction satisfies the segregation property if this holds initially. The existence proof is based on a three-level approximation scheme and a priori estimates coming from the energy and entropy inequalities. While the free energy is nonincreasing in time, the entropy is only bounded because of the cross-diffusion coupling.
\end{abstract}

\keywords{Cahn--Hilliard equation; cross-diffusion equations; degenerate mobility; singular potential; entropy; weak solutions.}

\subjclass[2000]{35D30, 35K35, 35K65, 35K67, 92C37.}

\maketitle


\section{Introduction}

Lymphangiogenesis describes the formation of lymphatic vessels from pre-existing ones similar to angiogenesis. In this paper, we analyze a two-phase diffusion system modeling the pre-patterning of lymphatic vessel morphology in collagen gels. The model, first suggested in \cite{RoFo08}, describes the interaction of the collagen gel with a solute, such as protons and nutrients. The equations have been modified and given a thermodynamically consistent form in \cite{JuWa23}. They describe the evolution of the volume fractions of the fiber phase $\phi(x,t)$ and fluid phase $1-\phi(x,t)$ as well as the concentration $c(x,t)$ of the solute. The unknowns are solutions to the cross-diffusion equations
\begin{align}
  \pa_t\phi &= \diver\big(M(\phi)(\na\mu - c\na\pa_cf(\phi,c)) \big),
  \label{m1} \\
  \pa_tc &= -\diver\big(cM(\phi)(\na\mu - c\na\pa_c f(\phi,c)) \big)
  + \diver\big(ce^{-\phi}\na\pa_c f(\phi,c) \big), \label{m2} \\
  \mu &= -\Delta \phi+\partial_\phi f(\phi,c)\quad\mbox{in }\Omega,\ t>0, \label{m3}
\end{align}
where $\Omega\subset\R^d$ ($d\ge 1$) is a bounded domain and $\pa_c=\pa/\pa c$, $\pa_\phi=\pa/\pa\phi$ are partial derivatives. The (degenerate) mobility is given as in \cite{RoFo08} by
\begin{equation}\label{1.M}
  M(\phi) = \phi^2(1-\phi)^2,
\end{equation}
and the energy density reads as
\begin{equation}\label{1.f}
  f(\phi,c) = \phi\log\phi + (1-\phi)\log(1-\phi) + \phi(1-\phi)
  + \frac{c^2}{2} + c(1-\phi).
\end{equation}
The first three terms represent the (nonconvex) Flory--Huggins energy \cite{Flo42,Hug41}, and the last two terms are the nutrient energy \cite[(2.63)]{GKT22}. The potential $\pa_\phi f(\phi,c)$ in \eqref{m2} contains the term $\log(\phi/(1-\phi))$ which is singular at $\phi=0$ and $\phi=1$. The coefficient $ce^{-\phi}$ can be interpreted as the effective diffusion of solute in the fluid phase \cite[(4)]{RoFo08}. Observe that the corresponding term is different from the diffusion in \cite{RoFo08} because of thermodynamic considerations; see \cite[Section 2]{JuWa23} for details on the derivation of the model.

Equations \eqref{m1}--\eqref{m3} are supplemented by initial and no-flux boundary conditions,
\begin{align}
  \phi(0) = \phi_0, \quad c(0) = c_0 &\quad\mbox{in }\Omega, \label{ic} \\
  \na\phi\cdot\nu = c\na c\cdot\nu = cM(\phi)\na\mu\cdot\nu = 0
  &\quad\mbox{on }\pa\Omega,\ t>0. \label{bc}
\end{align}
Model \eqref{m1}--\eqref{m3} is a fourth-order cross-diffusion system with the following features. If the chemical potential $\mu$ is constant, the diffusion matrix associated to the variables $(\phi,c)$ has a vanishing eigenvalue. This indicates that it is more convenient to work with the thermodynamic variables $(\mu,\pa_c f)$ (see below). If the nutrient energy is constant, we obtain the Cahn--Hilliard equation for phase separation with a nonconvex energy. Our aim is to prove the global existence of weak solutions to \eqref{m1}--\eqref{bc}.

\subsection{State of the art}

The study of two-phase models is stimulated by various applications modeling, for instance, tumor growth \cite{EGN21}, biofilm growth \cite{WaZh12}, or formation of lymphatic vessels \cite{RoFo08}. The mathematical modeling of lymphangiogenesis is rather recent. A discrete compartment model of the lymphatic system was already presented in the 1970s \cite{Red74}. More recently, a differential equations model was presented in \cite{BPS15}. The work \cite{FrLo05} analyzed a diffusion system with haptotaxis and chemotaxis terms for tumor lymphangiogenesis. The collagen pre-pattering caused by interstitial fluid flow is modeled in \cite{RoFo08} by Cahn--Hilliard-type equations. It was found in \cite{RoSw11} that a hexagonal lymphatic capillary network is optimal in terms of fluid drainage, confirmed by experiments in mouse tails and human skin. This hexagonal structure was also found in numerical simulations in two space dimensions \cite{JuWa23}.

The Cahn--Hilliard model was introduced in \cite{CaHi58} to study phase separations in binary alloys. The first existence analysis of Cahn--Hilliard equations was given in \cite{ElSo86} in one space dimension and in \cite{ElGa96} in several space dimensions. The existence and uniqueness of solutions to the Cahn--Hilliard system strictly depend on the properties of the mobility $M(\phi)$ (degenerate or nondegenerate) and the potential $\pa_\phi f(\phi,c)$ (singular or regular). Since the Cahn--Hilliard equations do not admit a comparison principle, lower and upper bounds for the phase variable cannot generally be obtained. A sufficient condition for the property $0\le\phi\le 1$ (if satisfied initially) is a degenerate mobility $M(\phi)$ \cite{Yin92} or a singular potential \cite{BlEl91}. We refer to the book \cite{Mir19} for more details and references.

Diffusion systems with Cahn--Hilliard terms were analyzed more recently, in particular in the context of biological membranes \cite{GKRR16} and tumor growth modeling \cite{RSS23}. In these models, the cross diffusion is of Keller--Segel type and thus, the diffusion matrix is triangular, which simplifies the analysis. In \cite{HJT23}, Maxwell--Stefan models for fluid mixtures with full diffusion matrix and Cahn--Hilliard-type chemical potentials were investigated. Finally, a Cahn--Hilliard cross-diffusion model arising in thin-film solar cell fabrication was analyzed in \cite{CEMP23,EMP21}. The techniques of these papers, however, cannot be employed for our model. In particular, the existence analysis of model \eqref{m1}--\eqref{m3} is new up to our knowledge.

\subsection{Key ideas}

The analysis of \eqref{m1}--\eqref{bc} is based on two observations. First, equations \eqref{m1}--\eqref{m3} can be written as the gradient-flow system
$$
  \pa_t u = \diver(B(u)\na\delta E(u)), \quad\mbox{where }
  u = \begin{pmatrix} \phi \\ c \end{pmatrix},
$$
the so-called mobility matrix reads as
$$
  B(u) = \begin{pmatrix}
  M(\phi) & -cM(\phi) \\ -cM(\phi) & ce^{-\phi} + c^2M(\phi)
  \end{pmatrix},
$$
and $\delta E(u)=(\mu,\pa_c f(\phi,c))^T$ is the variational derivative of the free energy
\begin{equation}\label{1.E}
  E(\phi,c) = \int_\Omega\bigg(\frac12|\na\phi|^2 + f(\phi,c)\bigg)dx,
\end{equation}
which is the sum of the correlation and interaction-nutrient energies. This implies the energy equality (see Lemma \ref{lem.ener})
\begin{equation}\label{1.ei}
  \frac{dE}{dt}(\phi,c)
  + \int_\Omega M(\phi)|\na\mu-c\na\pa_c f(\phi,c)|^2 dx
  + \int_\Omega ce^{-\phi}|\na\pa_c f(\phi,c)|^2 dx = 0,
\end{equation}
yielding $L^2(\Omega)$ bounds for $\na\phi$ and $c$. However, due to the strong coupling, we cannot conclude gradient bounds for the solute concentration $c$. Moreover, the equations are degenerate at $\phi=0$ and $c=0$.

Like in the Cahn--Hilliard equation, we exploit the entropy density
\begin{equation}\label{1.ent}
  \Phi(\phi) = \int_{1/2}^\phi\int_{1/2}^s \frac{drds}{M(r)} \ge 0.
\end{equation}
A formal computation (see Proposition \ref{entropy}) shows that
\begin{align*}
  \frac{d}{dt}&\int_\Omega\Phi(\phi)dx
  + \int_\Omega(\Delta\phi)^2 dx \\
  &= -\int_\Omega\big(\pa_{\phi\phi}^2 f(\phi,c)|\na\phi|^2
  + \pa_{\phi c}^2 f(\phi,c)\na c\cdot\na\phi
  + c\na\pa_c f(\phi,c)\cdot\na\phi\big)dx.
\end{align*}
While the first term on the right-hand side can be bounded by the $L^2(\Omega)$ estimate for $\na\phi$ from \eqref{1.ei} and the last term can be controlled with the help of the energy dissipation in \eqref{1.ei} (if $0\le\phi\le 1$), the second term is more delicate because of the gradient $\na c$. Since $\pa_{\phi c}^2 f(\phi,c)$ is constant in our model, an integration by parts leads to
\begin{align*}
  -\int_\Omega &\pa_{\phi c}^2 f(\phi,c)\na c\cdot\na\phi dx
  = \pa_{\phi c}^2 f\int_\Omega c\Delta\phi dx \\
  &\le C\|c\|_{L^2(\Omega)}\|\Delta\phi\|_{L^2(\Omega)}
  \le C + \frac14\int_\Omega(\Delta\phi)^2 dx.
\end{align*}
Thus, the entropy is not a Lyapunov functional along solutions to \eqref{m1}--\eqref{m3} (as it is in the Cahn--Hilliard system) but it is bounded from above. This yields an $H^2(\Omega)$ bound for $\phi$.

Still, we are missing a gradient estimate for $c$. This is achieved by observing that, as long as $0\le\phi\le 1$,
\begin{align*}
  \int_\Omega c|\na c|^2 dx
  &\le 2\int_\Omega c|\na c-\na\phi|^2 dx
  + 2\int_\Omega c|\na\phi|^2 dx \\
  &\le C\int_\Omega ce^{-\phi}|\na\pa_c f(\phi,c)|^2 dx
  + 2\|c\|_{L^2(\Omega)}\|\na\phi\|_{L^4(\Omega)}^2 \le C,
\end{align*}
since $H^2(\Omega)\hookrightarrow W^{1,4}(\Omega)$ in up to four space dimensions.

These arguments provide gradient bounds for $\phi$ and $c^{3/2}$ under the condition that $0\le\phi\le 1$. These lower and upper bounds cannot be easily derived from the boundedness-by-entropy method \cite{Jue15}, since the relation between the entropy variable $\mu$ and the fiber phase $\phi$ is not algebraic. One idea is based on the minimization of a functional on the set of functions satisfying $0\le\phi\le 1$ \cite{EMP21}. Here, we introduce the entropy with cutoff and conclude the bounds in the limit of vanishing cutoff parameters as in \cite{ElGa96,HJT23}. The idea is simple: The entropy with cutoff $\Phi_\delta$ for $\delta>0$ satisfies
$$
  \Phi_\delta(\phi_\delta(t,x))
  = \frac{(\phi_\delta(t,x)-1/2)^2}{2\delta^2(1-\delta)^2}
  \quad\mbox{for }(t,x)\in
  V_\alpha:=\{\phi_\delta(t,x)\ge 1+\alpha\},
$$
for any $\alpha>0$, where $\phi_\delta$ is the approximate fiber phase fraction (see Section \ref{sec.N}). Therefore, in view of the entropy bound,
$$
  \frac{\alpha^2\mbox{meas}(V_\alpha)}{2\delta^2(1-\delta)^2}
  \le \int_{V_\alpha}\frac{(\phi_\delta-1/2)^2}{2\delta^2(1-\delta)^2}dx
  = \int_{V_\alpha}\Phi_\delta(\phi_\delta)dx \le C.
$$
We obtain $\mbox{meas}(V_\alpha)=0$ in the limit $\delta\to 0$, and since $\alpha>0$ is arbitrary (and using Fatou's lemma), $\phi(t,x)=\lim_{\delta\to 0}\phi_\delta(t,x)<1$ for all $(t,x)$; see Lemma \ref{lemma4} for the precise argument. The lower bound $\phi>0$ is proved in a similar way.

\subsection{Main result}

We first define our notion of weak solution.

\begin{definition}[Weak solution]\label{def}
Let $T>0$ be arbitrary and set $\Omega_T:=(0,T)\times\Omega$. The function $(\phi,c)$ is called a {\em weak solution} to problem \eqref{m1}--\eqref{bc} on $[0,T]$ if $(\phi,c)$ satisfies $0<\phi<1$, $c\ge 0$ in $\Omega_T$,
\begin{align*}
  & \phi\in L^\infty(0,T;H^1(\Omega))\cap L^2(0,T;H^2(\Omega)), \quad
  \pa_t\phi\in L^2(0,T;H^{1}(\Omega)'), \\
  & c\in L^\infty(0,T;L^2(\Omega)),\ c^{3/2}\in L^2(0,T;H^1(\Omega)),\
  \pa_t c\in L^4(0,T;W^{1,8d/(d+4)}(\Omega)'),
\end{align*}
the initial conditions $\phi(0)=\phi_0$ in $L^2(\Omega)$,
$c(0)=c_0$ in the sense of $H^1(\Omega)'$, $(\phi,c)$ verifies the weak formulation
\begin{align*}
  & \int_0^T\langle\partial_t\phi,\psi_1\rangle_1dt
  + \int_0^T\int_\Omega\sqrt{M(\phi)}I\cdot\nabla\psi_1dxdt = 0, \\
  & \int_0^T\langle\partial_t c,\psi_2\rangle_2 dt
  = \int_0^T\int_\Omega c\sqrt{M(\phi)}I\cdot\nabla\psi_2dxdt
  - \int^T_0\int_\Omega ce^{-\phi}\na\pa_cf(\phi,c)\cdot\na\psi_2dxdt
\end{align*}
for all $\psi_1\in L^2(0,T;H^1(\Omega))$, $\psi_2\in L^4(0,T;W^{1,8d/(d+4)}(\Omega))$, and $\mu$ solves
$$
  \mu = -\Delta \phi+\partial_\phi f(\phi,c)\quad\mbox{a.e.\ in }
  \Omega_T.
$$
Here, $\langle\cdot,\cdot\rangle_1$ is the dual product between $H^1(\Omega)'$ and $H^1(\Omega)$, $\langle\cdot,\cdot\rangle_2$ is the dual product between $W^{1,8d/(d+4)}(\Omega)'$ and $W^{1,8d/(d+4)}(\Omega)$, and it holds that
$$
  I=\sqrt{M(\phi)}\big(\nabla\mu-c\nabla\partial_cf(\phi,c)\big)
  \in L^2(\Omega_T)\quad\mbox{in the weak sense},
$$
i.e.\ for any $\Psi\in L^2(0,T;H^{1}(\Omega;\R^d))\cap L^\infty(\Omega_T;\R^d)$ with $\Psi\cdot \nu=0$ on $(0,T)\times\partial\Omega$,
\begin{align*}
  \int_0^T\int_\Omega I\cdot\Psi dxdt
  &= \int_0^T\int_\Omega\big[\Delta\phi\diver\big(\sqrt{M(\phi)}\Psi\big)
  + \big(\nabla\phi-2\sqrt{M(\phi)}\nabla\phi\big)\cdot\Psi \\
  &\phantom{xx}+ c\diver\big(\sqrt{M(\phi)}\Psi\big)
  - \sqrt{M(\phi)}c\nabla\partial_cf(\phi,c)\cdot\Psi\big]dxdt.
\end{align*}
\end{definition}

Since it is difficult to derive an upper bound for $c$, we cannot expect that the weak formulation of the equation for $c$ holds for $\psi_2\in L^2(0,T;H^1(\Omega))$. Because of the degeneracy, we cannot expect a gradient bound for $\mu$, but we obtain $\sqrt{M(\phi)}\na\mu\in L^2(0,T;L^{4/3}(\Omega))$.

Our existence result reads as follows.

\begin{theorem}\label{main}
Let $\Omega\subset\R^d$ $(d\le 4)$ be a bounded domain with boundary $\pa\Omega\in C^2$, let $T>0$, and let $\phi_0\in H^1(\Omega)$, $c_0\in L^2(\Omega)$ satisfy $0<\phi_*\le\phi_0\le 1-\phi_*<1$, $c_0\ge 0$ in $\Omega$ for some $\phi_*\in(0,1)$. Then problem \eqref{m1}--\eqref{bc} possesses a weak solution $(\phi,c)$ in $[0,T]$ in the sense of Definition \ref{def}.
\end{theorem}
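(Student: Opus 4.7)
The plan is to construct weak solutions via a three-level approximation, establish uniform a priori estimates from the energy identity \eqref{1.ei} and the cutoff entropy $\Phi_\delta$, and then pass to the limit in the three parameters in turn.

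For the approximation I would, at the innermost level $\delta>0$, replace the singular logarithmic contribution $\phi\log\phi+(1-\phi)\log(1-\phi)$ to $f(\phi,c)$ by a $C^2$ truncation that agrees with the original on $[\delta,1-\delta]$ and is extended quadratically outside, using the correspondingly regularized entropy density $\Phi_\delta$ discussed in the introduction. At the intermediate level $\eps>0$, I would regularize the degeneracies by replacing $M(\phi)$ with $M(\phi)+\eps$ and $ce^{-\phi}$ by a suitable nondegenerate analogue, so that the principal part becomes uniformly parabolic. The outermost level would be a Galerkin discretization in the variables $(\phi,c)$; existence of the discrete solutions follows from Brouwer's fixed-point theorem once the scheme is written as a coercive ODE in the Galerkin coefficients, with the coercivity furnished by the discrete energy identity.

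Next I would derive the two basic a priori estimates. Testing the approximate equations by the thermodynamic variables $(\mu,\pa_c f)^T$ gives a discrete version of \eqref{1.ei}, yielding $\phi\in L^\infty(0,T;H^1(\Omega))$, $c\in L^\infty(0,T;L^2(\Omega))$, and $L^2(\Omega_T)$ bounds on both dissipation terms on the left-hand side of \eqref{1.ei}. Applying the entropy computation of Proposition \ref{entropy} to $\Phi_\delta$, and absorbing the mixed term $\pa_{\phi c}^2 f\,\na c\cdot\na\phi$ into the dissipation via the integration-by-parts trick sketched in the introduction, I obtain a uniform $L^2(0,T;H^2(\Omega))$ bound for $\phi$. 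Combining this $H^2$ bound with the identity $\na\pa_c f(\phi,c)=\na c-\na\phi$, the dissipation bound for $\sqrt{ce^{-\phi}}\na\pa_c f$, and the embedding $H^2(\Omega)\hookrightarrow W^{1,4}(\Omega)$ (valid for $d\le 4$), I get the $L^2(\Omega_T)$ estimate for $\na c^{3/2}$ displayed in the introduction. Time-derivative bounds for $\pa_t\phi$ in $L^2(H^1(\Omega)')$ and $\pa_t c$ in $L^4(W^{1,8d/(d+4)}(\Omega)')$ then follow from the weak formulations, Hölder's inequality, and Sobolev embeddings applied to the fluxes.

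With these bounds I would invoke Aubin--Lions to obtain strong compactness of $\phi$ in $L^2(0,T;H^1(\Omega))$ and $C^0([0,T];L^2(\Omega))$, and of $c$ in $L^2(\Omega_T)$, from which the nonlinearities $M(\phi)$, $\pa_\phi f(\phi,c)$, $\pa_c f(\phi,c)$ and $e^{-\phi}$ can be identified in the limit. The flux $I$ cannot be identified as a pointwise product of weak limits, since $\na\mu$ is not individually controlled; instead one passes to the limit through the dual formulation built into Definition \ref{def}, which tests $\sqrt{M(\phi)}\Psi$ against second-order quantities in $\phi$ and $c$, using the $H^2$-regularity and strong $L^2$-convergence of $\na\phi$. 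The limits $N\to\infty$ and $\eps\to 0$ are standard; in the final limit $\delta\to 0$ the strict bounds $0<\phi<1$ a.e.\ follow from the uniform entropy bound via the measure-theoretic argument sketched in the introduction. I expect the main difficulty to lie in this last stage: the three regularizations must be coordinated carefully, the flux $I$ must be handled throughout via its weak dual formulation rather than via $\na\mu$, and the entropy inequality must stay uniformly valid as $\delta\to 0$ so that the strict bounds $0<\phi<1$ can be recovered simultaneously with the identification of all nonlinear terms in the weak formulation.
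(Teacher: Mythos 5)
Your overall strategy matches the paper's (three-level approximation, energy identity plus entropy-with-cutoff, $\na c^{3/2}$ bound, measure-theoretic argument for $0<\phi<1$, dual identification of the flux $I$), but there are genuine gaps in how the approximation levels are designed, and they are not mere bookkeeping. First, you have no mechanism to control $c$ at the Galerkin and $\eps$ stages. Your $\na c^{3/2}$ estimate relies on the $L^2(0,T;H^2(\Omega))$ bound for $\phi$ coming from the entropy inequality, but that inequality cannot be derived at the Galerkin level (the test functions $\Phi_\delta'(\phi_N)$ and $\Delta\phi_N$ do not belong to the Galerkin space $X_N$); in the paper it is established only after $N\to\infty$ (Lemma \ref{lemma5}). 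Without it, and without any other device, you have no gradient bound, no time-derivative bound (the coefficients $cM$, $c^2M$, $ce^{-\phi}$ are unbounded in $c$), hence no compactness for $c_N$ to pass to the limit in the quadratic terms $c\na\pa_cf$, and no proof of $c\ge 0$, which Definition \ref{def} requires. This is exactly why the paper truncates $c$ by $[c]_+^\eps$ and adds the artificial diffusion $\delta\Delta c$ in \eqref{2}: the truncation makes the fluxes bounded so that the Faedo--Galerkin estimates and time-derivative bounds close, and the artificial diffusion supplies the $L^2$ gradient bound for $c$ (see \eqref{4}) and makes the test-function argument with $[c]_-$ yield nonnegativity. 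Your proposal drops both and declares the limits $N\to\infty$ and $\eps\to 0$ ``standard,'' which hides precisely the steps that fail.

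Second, regularizing the mobility by $M+\eps$ while truncating the potential and building $\Phi_\delta$ at level $\delta$ decouples the entropy from the mobility. The entropy computation hinges on the exact cancellation $\Phi''(\phi)\,M(\phi)=1$ when testing the $\phi$-equation with $\Phi'(\phi)$; with mobility $M+\eps$ and $\Phi_\delta''=1/M_\delta$ you instead get a factor $(M(\phi)+\eps)/M_\delta(\phi)$, producing error terms of size $\eps/\delta^2(1-\delta)^2$ near $\phi\in\{0,1\}$ that you can only absorb by coordinating $\eps\ll\delta^2$ (or by building the entropy from $M+\eps$, in which case the $0<\phi<1$ argument and the Elliott--Garcke identification of $I$ must be carried out in the $\eps\to 0$ limit rather than at $\delta\to 0$, since after $\eps\to 0$ the mobility is already degenerate). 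The paper avoids this by tying the mobility cutoff $M_\delta$ and the entropy $\Phi_\delta$ to the same parameter, so both degeneracy and singularity are removed simultaneously in the single limit $\delta\to 0$. Finally, a smaller but real point: in the last limit you only control $\na c_\delta^{3/2}$, not $\na c_\delta$ (see \eqref{24}), so the classical Aubin--Lions lemma does not directly give strong convergence of $c_\delta$; the paper needs the ``nonlinear'' Aubin--Lions lemma of \cite{CJL14} here, and your argument should too.
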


The proof is based on an approximation procedure. We introduce three approximation levels. First, we remove the degeneracy in $M(\phi)$ and the singularities in $f(\phi,c)$ by suitable cutoffs with parameter $\delta>0$ and then truncate the diffusion coefficients in the equation for the solute concentration $c$ with parameter $\eps>0$. Because of the lack of a gradient estimate for $c$, we add an artificial diffusion in the equation for $c$ (with parameter $\delta$). Finally, we solve the approximate system in a Faedo--Galerkin space with dimension $N\in\N$. Its global solvability follows from standard arguments and the energy inequality. 
After the limit $N\to\infty$, we are able to conclude the nonnegativity of the concentration. Then the entropy estimate and the artificial diffusion yield gradient bounds and we can pass to the limit $\eps\to 0$. The most delicate part is the limit $\delta\to 0$ in the approximate system with solutions $(\phi_\delta,c_\delta)$. The main idea is to derive a gradient bound for $c_\delta^{3/2}$ and to apply the ``nonlinear'' Aubin--Lions lemma in the version of \cite{CJL14} to conclude the relative compactness of $(c_\delta)$ in $L^3(\Omega)$. To identify the weak limit $I$, we proceed similarly as in \cite[Section 3]{ElGa96}.

The paper is organized as follows. We derive the formal energy and entropy estimates in Section \ref{sec.form}. The Faedo--Galerkin method and the limit $N\to\infty$ are presented in Section \ref{sec.N}. The limits $\eps\to 0$ and then $\delta\to 0$ are performed in Sections \ref{limitvare} and \ref{l}, respectively.


\section{Formal estimations}\label{sec.form}

For the convenience of the reader, we derive the energy and entropy estimates formally for general energy densities $f(\phi,c)$. Rigorous proofs on an approximate level are shown in Lemma \ref{lem.ener} (energy inequality) and Lemma \ref{lemma5} (entropy inequality).

\begin{proposition}[Energy equality]\label{energy}
Let $(\phi,c,\mu)$ be a smooth solution to \eqref{m1}--\eqref{m3} with the initial and boundary conditions \eqref{ic}--\eqref{bc}, satisfying $0\le\phi\le 1$ in $\Omega_T$. Then
\begin{align*}
  \frac{d}{dt}E(\phi,c) + \int_\Omega M(\phi)
  |\na\mu - c\na\pa_c f(\phi,c)|^2 dx
  + \int_\Omega c e^{-\phi}|\na\pa_c f(\phi,c)|^2 dx = 0,
\end{align*}
recalling definition \eqref{1.E} of the energy $E(\phi,c)$.
If $f(\phi,c)\ge\kappa c^2$ holds for some $\kappa>0$, this gives a priori estimates for $\phi$ in $L^\infty(0,T;H^1(\Omega))$ and for $c$ in $L^\infty(0,T;L^2(\Omega))$.
\end{proposition}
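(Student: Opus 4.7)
The plan is to differentiate $E(\phi,c)$ in time and test each evolution equation against the corresponding variational derivative of $E$: these are $\mu$ for \eqref{m1} and $\partial_c f(\phi,c)$ for \eqref{m2}. The cross terms from the two tests should cancel and leave a single perfect square in the flux $J:=\nabla\mu-c\nabla\partial_c f$, which is precisely the first dissipation term, while the diffusive flux $ce^{-\phi}\nabla\partial_c f$ tested against $\partial_c f$ produces the second. A clean first step is to establish
\[
  \frac{d}{dt}E(\phi,c)=\int_\Omega\mu\,\partial_t\phi\,dx+\int_\Omega\partial_c f(\phi,c)\,\partial_t c\,dx
\]
by differentiating under the integral sign, integrating $\int_\Omega\nabla\phi\cdot\nabla\partial_t\phi\,dx$ by parts, and using $\nabla\phi\cdot\nu=0$ on $\partial\Omega$.

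I would then substitute \eqref{m1}--\eqref{m2} into the right-hand side and integrate by parts a second time. The first integral produces $-\int_\Omega M(\phi)\nabla\mu\cdot J\,dx$ and the second gives $\int_\Omega cM(\phi)\nabla\partial_c f\cdot J\,dx-\int_\Omega ce^{-\phi}|\nabla\partial_c f|^2\,dx$; combining the first two yields $-\int_\Omega M(\phi)|J|^2\,dx$, which is the desired dissipation structure. Using $\partial_c f(\phi,c)=c+(1-\phi)$ (so that $\nabla\partial_c f=\nabla c-\nabla\phi$), one checks that each of the boundary fluxes $M(\phi)J\cdot\nu$, $cM(\phi)J\cdot\nu$ and $ce^{-\phi}\nabla\partial_c f\cdot\nu$ is annihilated by a combination of the three no-flux conditions in \eqref{bc}, together with the natural condition $M(\phi)\nabla\mu\cdot\nu=0$ that a smooth solution of \eqref{m1} inherits. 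This yields the announced energy equality.

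For the a priori estimates, integrating the equality on $[0,t]$ gives $E(\phi(t),c(t))\le E(\phi_0,c_0)$ for every $t\in[0,T]$. For $0\le\phi\le 1$ the logarithmic and $\phi(1-\phi)$ contributions to $f$ are uniformly bounded, and since $c\ge 0$ the cross term $c(1-\phi)$ is nonnegative; so the hypothesis $f\ge\kappa c^2$ (modulo an additive constant) extracts uniform bounds $\|c(t)\|_{L^2(\Omega)}^2\le C$ and $\|\nabla\phi(t)\|_{L^2(\Omega)}^2\le C$. Combined with $\phi\le 1$ pointwise, this gives $\phi\in L^\infty(0,T;H^1(\Omega))$ and $c\in L^\infty(0,T;L^2(\Omega))$. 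The main subtlety I anticipate is bookkeeping of boundary contributions rather than analysis: since the smooth setting makes every integration by parts valid, the formal computation goes through directly, whereas the rigorous justification on the approximate level, where chain-rule and regularity issues must be handled, is postponed to Lemma \ref{lem.ener}.
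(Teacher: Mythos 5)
Your proposal is correct and follows essentially the same route as the paper: differentiate $E$, use $\frac{dE}{dt}=\int_\Omega(\mu\,\partial_t\phi+\partial_c f\,\partial_t c)\,dx$, insert \eqref{m1}--\eqref{m2}, integrate by parts, and combine the cross terms into the perfect square $M(\phi)|\nabla\mu-c\nabla\partial_c f|^2$, with the a priori bounds read off from the decay of $E$ exactly as intended. Your extra bookkeeping of the boundary fluxes is slightly more careful than the paper's formal computation (which drops them silently); only the phrase that $M(\phi)\nabla\mu\cdot\nu=0$ is ``inherited'' from \eqref{m1} is a small imprecision --- it is really part of the no-flux interpretation of \eqref{bc} (cf.\ the approximate conditions \eqref{a2}) --- but this does not affect the formal argument.
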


\begin{proof}
We compute the time derivative, integrate by parts, and insert equations \eqref{m1} and \eqref{m2}:
\begin{align*}
  \frac{dE}{dt} &= \int_\Omega\big(-\Delta\phi\pa_t\phi
  + \pa_\phi f(\phi,c)\pa_t\phi + \pa_c f(\phi,c)\pa_t c\big)dx
  = \int_\Omega\big(\mu\pa_t\phi + \pa_c f(\phi,c)\pa_t c\big)dx \\
  &= -\int_\Omega\big[M(\phi)\na\mu\cdot(\na\mu-c\pa_cf(\phi,c))
  - cM(\phi)\na\pa_c f(\phi,c)\cdot(\na\mu-c\na\pa_c f(\phi,c)) \\
  &\phantom{xx}+ ce^{-\phi}|\na\pa_c f(\phi,c)|^2\big]dx \\
  &= -\int_\Omega M(\phi)|\na\mu-c\na\pa_c f(\phi,c)|^2 dx
  - \int_\Omega ce^{-\phi}|\na\pa_c f(\phi,c)|^2 dx,
\end{align*}
which ends the proof.
\end{proof}

The second estimate involves the entropy density defined in \eqref{1.ent}.

\begin{proposition}[Entropy equality]\label{entropy}
Let $(\phi,c,\mu)$ be a smooth solution to \eqref{m1}--\eqref{m3} with the initial and boundary conditions \eqref{ic}--\eqref{bc}, satisfying $0\le\phi\le 1$ in $\Omega_T$. We assume that $f(\phi,c)\ge\kappa c^2$ for some $\kappa>0$, $\pa_{\phi\phi}^2 f(\phi,c)$ is bounded from below, 
and $\pa_{\phi c}^2 f(\phi,c)$ is constant for all $(\phi,c)$. Then
\begin{align*}
 \frac{d}{dt}\int_\Omega\Phi(\phi)dx + \frac12\int_\Omega
  (\Delta\phi)^2 dx \le C,
\end{align*}
where $C>0$ depends on $E(\phi_0,c_0)$.
\end{proposition}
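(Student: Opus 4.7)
The plan is to mirror the energy calculation of Proposition~\ref{energy}, now exploiting the algebraic identity $\Phi''(\phi)M(\phi)\equiv 1$ built into the definition \eqref{1.ent}. I would begin with $\frac{d}{dt}\int_\Omega\Phi(\phi)\,dx=\int_\Omega\Phi'(\phi)\pa_t\phi\,dx$, insert \eqref{m1}, and integrate by parts; the boundary terms vanish by \eqref{bc}, and the identity $\Phi''M=1$ cancels the mobility to leave $-\int_\Omega\na\phi\cdot(\na\mu-c\,\na\pa_c f)\,dx$. Substituting $\mu=-\Delta\phi+\pa_\phi f$ from \eqref{m3}, a further integration by parts of $\int\na\phi\cdot\na\Delta\phi\,dx$ (using $\na\phi\cdot\nu=0$) produces the good dissipation $-\int(\Delta\phi)^2\,dx$, and the chain rule $\na\pa_\phi f=\pa^2_{\phi\phi}f\,\na\phi+\pa^2_{\phi c}f\,\na c$ gives
\begin{equation*}
\frac{d}{dt}\int_\Omega\Phi(\phi)\,dx + \int_\Omega(\Delta\phi)^2\,dx
= -\int_\Omega\pa^2_{\phi\phi}f\,|\na\phi|^2 dx - \int_\Omega\pa^2_{\phi c}f\,\na c\cdot\na\phi\,dx + \int_\Omega c\,\na\phi\cdot\na\pa_c f\,dx,
\end{equation*}
whose three right-hand terms I call $T_1$, $T_2$, $T_3$.

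The first two terms are routine. For $T_1$, the hypothesized lower bound on $\pa^2_{\phi\phi}f$ together with the $L^\infty(0,T;L^2)$ control of $\na\phi$ coming from Proposition~\ref{energy} immediately give $|T_1|\le C$. For $T_2$, the assumption that $\pa^2_{\phi c}f\equiv A$ is constant permits an integration by parts, turning $-A\int\na c\cdot\na\phi\,dx$ into $A\int c\,\Delta\phi\,dx$ (the boundary term vanishes thanks to $\na\phi\cdot\nu=0$); Cauchy--Schwarz, Young's inequality, and the $L^\infty(0,T;L^2)$ bound on $c$ then yield $|T_2|\le C + \tfrac{1}{4}\int(\Delta\phi)^2\,dx$, exactly as sketched in the introduction.

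The heart of the proof is $T_3$, because $\na\pa_c f$ hides a gradient of $c$ which is not pointwise controlled in time. I would apply a weighted Cauchy--Schwarz,
\begin{equation*}
|T_3|\le \tfrac{1}{2}\int_\Omega c\,e^{\phi}|\na\phi|^2 dx + \tfrac{1}{2}\int_\Omega c\,e^{-\phi}|\na\pa_c f|^2 dx,
\end{equation*}
in which the second integral is exactly the second dissipation term of Proposition~\ref{energy} and is absorbed into $C$ (rigorously, into an $L^1(0,T)$ contribution, consistent with the informal character of the statement). For the first integral, $e^{\phi}\le e$ since $0\le\phi\le 1$; H\"older's inequality gives $\int c|\na\phi|^2 dx\le\|c\|_{L^2}\|\na\phi\|_{L^4}^2$, and the Gagliardo--Nirenberg interpolation $\|\na\phi\|_{L^4}^2\le C\|\phi\|_{H^2}^{d/2}\|\na\phi\|_{L^2}^{2-d/2}$, valid in the assumed dimensions $d\le 4$, combined with the Neumann elliptic regularity $\|\phi\|_{H^2}\le C(\|\phi\|_{L^2}+\|\Delta\phi\|_{L^2})$ and the $L^2$ bounds from \eqref{1.ei}, reduces this piece to $\eta\int(\Delta\phi)^2 dx+C_\eta$ after Young. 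Choosing $\eta$ small enough that the two absorbed contributions leave $\tfrac{1}{2}\int(\Delta\phi)^2 dx$ on the left yields the claimed inequality. The genuinely delicate step is this final interpolation for $T_3$, which is precisely where the dimension restriction $d\le 4$ enters; everything else is a direct copy of the energy-type computation.
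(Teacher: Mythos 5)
Your derivation of the entropy identity and your handling of $T_1$ and $T_2$ coincide with the paper's proof (one nitpick: the lower bound on $\pa_{\phi\phi}^2 f$ only gives $T_1\le C$, not $|T_1|\le C$, but that is all you need). The gap is in $T_3$ at the borderline dimension $d=4$, which the paper's setting includes. After your pointwise Young split you are left with $\tfrac{e}{2}\int_\Omega c|\na\phi|^2dx\le C\|c\|_{L^2(\Omega)}\|\na\phi\|_{L^4(\Omega)}^2$, and your Gagliardo--Nirenberg bound $\|\na\phi\|_{L^4}^2\le C\|\phi\|_{H^2}^{d/2}\|\na\phi\|_{L^2}^{2-d/2}$ has exponent $d/2=2$ exactly when $d=4$: the term becomes $C\|c\|_{L^2}\|\phi\|_{H^2}^2$, i.e.\ already quadratic in $\|\Delta\phi\|_{L^2}$ with prefactor $\|c\|_{L^2}$, which is bounded by the energy but not small, so there is no Young step left that produces an arbitrarily small $\eta$; ``choosing $\eta$ small enough'' only works for $d\le 3$, where $d/2<2$. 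Changing the H\"older pairing (e.g.\ $\|c\|_{L^p}\|\na\phi\|_{L^{2p'}}^2$ with $p<2$) only raises the Lebesgue exponent on $\na\phi$, so the obstruction cannot be removed within your splitting.

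The paper avoids this by not applying Young pointwise first: it estimates $\int c\,\na\pa_c f\cdot\na\phi\,dx$ by the trilinear H\"older bound $\|\sqrt{c}\|_{L^\infty(0,T;L^4(\Omega))}\|\sqrt{c}\,\na\pa_c f\|_{L^2(\Omega_T)}\|\na\phi\|_{L^2(0,T;L^4(\Omega))}$, keeping the dissipation factor $\sqrt{c}\,\na\pa_c f$ (controlled in $L^2(\Omega_T)$ by \eqref{1.ei} since $e^{-\phi}\ge e^{-1}$) as a single factor, so that $\na\phi$ --- and hence, after elliptic regularity, $\Delta\phi$ --- enters only to the first power; Young's inequality then absorbs it with the factor $\tfrac14$ for every $d\le4$, including the critical case. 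If you reorganize your $T_3$ estimate in this order (H\"older first, Young last), the rest of your argument goes through unchanged. Your remark that the absorbed dissipation is only an $L^1(0,T)$ contribution is acceptable here, since the paper's formal estimate has the same time-integrated character (and its rigorous counterpart, Lemma \ref{lemma5}, is stated in integrated form).
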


\begin{proof}
We compute the time derivative, observe that $\Phi''(\phi)=1/M(\phi)$, and insert equation \eqref{m3} for $\mu$:
\begin{align}\label{2.1}
  \frac{d}{dt}\int_\Omega\Phi(\phi)dx
  &= -\int_\Omega\Phi''(\phi)\na\phi\cdot\big(M(\phi)
  (\na\mu-c\na\pa_c f(\phi,c))\big)dx \\
  &= -\int_\Omega\big(-\na\Delta\phi + \na\pa_\phi f(\phi,c)
  - c\na\pa_c f(\phi,c)\big)\cdot\na\phi dx \nonumber \\
  &= -\int_\Omega\big((\Delta\phi)^2 + \pa_{\phi\phi}^2 f(\phi,c)
  |\na\phi|^2 + \pa_{\phi c}^2 f(\phi,c)\na c\cdot\na\phi \nonumber \\
  &\phantom{xx}- c\na\pa_c f(\phi,c)\cdot\na\phi\big) dx. \nonumber
\end{align}
The last term can be estimated by using H\"older's inequality:
\begin{align*}
  \int_\Omega &c\na\pa_c f(\phi,c)\cdot\na\phi dx \\
  &\le \|\sqrt{c}\|_{L^\infty(0,T;L^4(\Omega))}
  \|\sqrt{c}\na\pa_c f(\phi,c)\|_{L^2(\Omega_T)}
  \|\na\phi\|_{L^2(0,T;L^4(\Omega))}.
\end{align*}
The continuous embedding $H^1(\Omega)\hookrightarrow L^4(\Omega)$ and the elliptic regularity \cite[Theorem 2.24]{Tro87}
\begin{align*}
  \|\phi\|_{H^2(\Omega)}
  \le C\big(\|\Delta\phi\|_{L^2(\Omega)} + \|\phi\|_{H^1(\Omega)}\big)
  \le C\big(\|\Delta\phi\|_{L^2(\Omega)}
  + \|\na\phi\|_{L^2(\Omega)} + 1\big)
\end{align*}
where we also used the bound $0\le\phi\le 1$,
show that
$$
  \|\na\phi\|_{L^4(\Omega)} \le C\|\phi\|_{H^2(\Omega)}
  \le C\big(\|\Delta\phi\|_{L^2(\Omega)}
  + \|\na\phi\|_{L^2(\Omega)} + 1\big).
$$
We infer from Young's inequality and Proposition \ref{energy} that
\begin{align*}
  -\int_\Omega c\na\pa_c f(\phi,c)\cdot\na\phi dx
  \le \frac14\int_\Omega(\Delta\phi)^2 dx
  + \frac14\int_\Omega|\na\phi|^2 dx + C.
\end{align*}
The first term on the right-hand side is absorbed from \eqref{2.1}, while the second one is bounded uniformly in time, because of the energy bound. 

The delicate term is the third term on the right-hand side of \eqref{2.1}, since we do not have any gradient control for $c$. Under the condition that $\pa_{\phi c}^2 f(\phi,c)$ is constant, an integration by parts and Young's inequality yield
\begin{align*}
  -\int_\Omega \pa_{\phi c}^2 f(\phi,c)\na c\cdot\na\phi dx
  = \pa_{\phi c}^2 f\int_\Omega c\Delta\phi dx
  \le C\|c\|_{L^\infty(0,T;L^2(\Omega))}^2
  + \frac14\int_\Omega(\Delta\phi)^2 dx.
\end{align*}
The first term on the right-hand side is controlled by the energy equality, while the second one is absorbed by the first term on the right-hand side of \eqref{2.1}.
\end{proof}

The condition that $\pa_{\phi c}^2f$ is constant can be replaced by (i) $\pa_{\phi c}^2 f$ does not depend on $c$ and is bounded, and (ii) $c\pa_{\phi\phi c}^3f\le 0$. Indeed, by integrating by parts,
\begin{align*}
  -\int_\Omega \pa_{\phi c}^2 f(\phi,c)\na c\cdot\na\phi dx
  &= \int_\Omega\big(c\pa_{\phi\phi c}^3 f|\na\phi|^2
  + c\pa_{\phi c}^2 f\Delta\phi\big)dx \\
  &\le C\|c\|_{L^\infty(0,T;L^2(\Omega))}^2
  \|\pa_{\phi c}^2 f\|_{L^\infty(\Omega_T)}^2
  + \frac14\int_\Omega(\Delta\phi)^2 dx.
\end{align*}
In any case, we need that the potential $\pa_\phi f$ grows at most linearly in $c$ and that the energy $f$ contains a superlinear term in $c$. Thus, we may generalize the nutrient part of the energy, but its structure needs to be similar to the function $(\phi,c)\mapsto c^2/2+c(1-\phi)$ considered in this paper.


\section{Approximate solutions}\label{sec.N}

We introduce first the approximate problem. To this end, we define for fixed $\delta>0$ the nondegenerate mobility
\begin{align*}
  M_\delta(\phi) = \begin{cases}
  M(\delta) &\mbox{if }\phi\leq\delta, \\
  M(\phi) &\mbox{if }\delta\leq\phi\leq 1-\delta, \\
  M(1-\delta) &\mbox{if }\phi\geq 1-\delta.
  \end{cases}
\end{align*}
The free energy density \eqref{1.f} is split into a convex part $f_1$ and a nonconvex part $f_2$, where
$$
  f_1(\phi) = \phi\log\phi + (1-\phi)\log(1-\phi), \quad
  f_2(\phi,c) = \phi(1-\phi) + \frac{c^2}{2} + c(1-\phi).
$$
We define an approximation of $f_1$ on $\R$ to remove the singularities at $\phi=0$ and $\phi=1$:
\begin{align*}
  f_{1,\delta}(\phi) = \begin{cases}
  & f_1(\delta) + f_1'(\delta)(\phi-\delta) + \frac{1}{2}f_1''(\delta)(\phi-\delta)^2
  \quad\mbox{if }\phi\leq\delta, \\
  & f_1(\phi)
  \quad\mbox{if }\delta\leq\phi\leq 1-\delta, \\
  & f_1(1-\delta) + f_1'(1-\delta)(\phi-(1-\delta)) + \frac{1}{2}f_1''(1-\delta)(\phi-(1-\delta))^2
  \quad\mbox{else},
  \end{cases}
\end{align*}
and we set
$$
  f_\delta(\phi,c) = f_{1,\delta}(\phi) + f_2(\phi,c).
$$
Finally, we introduce the truncations
$$
  [\phi]_+^1 = \min\{1,\max\{0,\phi\}\}, \quad
  [c]_+^\eps = \min\{1/\eps,\max\{0,c\}\},
$$
where $0<\eps<1$. Then our approximate system reads as
\begin{align}
  \partial_t\phi &= \diver\big(M_\delta(\phi)(\nabla\mu-[c]_+^\eps
  \nabla\partial_cf_\delta(\phi,c)) \big),\label{1} \\
  \partial_t c &= -{\rm{div}}\big([c]_+^\eps
  M_\delta(\phi)(\nabla\mu-[c]_+^\eps\nabla
  \partial_cf_\delta(\phi,c)) \big)
  + \diver\big([c]_+^\eps e^{-[\phi]^1_+}\nabla
  \partial_cf_\delta(\phi,c) \big)+\delta\Delta c, \label{2} \\
  \mu &= -\Delta\phi+\partial_\phi f_\delta(\phi,c), \nonumber 
\end{align}
with the initial and homogeneous Neumann boundary conditions
\begin{align}
  \phi(0)=\phi_0,\quad c(0)=c_0 &\quad\mbox{in } \Omega, \label{a1} \\
  \nabla\phi\cdot\nu = \nabla\mu\cdot\nu = \nabla c\cdot\nu = 0 &\quad \mbox{on }(0,T)\times\partial\Omega.\label{a2}
\end{align}
Observe that we added the artificial diffusion $\delta\Delta c$ in \eqref{2} to obtain gradient bounds for $c$. The truncations $[c]_+^\eps$ and $[\phi]_+^1$ provide bounded diffusion coefficients needed to derive gradient bounds for the Faedo--Galerkin approximation. The existence of global weak solutions to the nondegenerate approximate problem \eqref{1}--\eqref{a2} is shown in a classical way by means of the Faedo--Galerkin method, energy estimates, and a compactness argument. Since the estimations are strongly model-dependent, we present a full proof.

\subsection{Faedo--Galerkin approximation}

Let $(e_i)_{i\in\N}$ be a complete orthonormal set of eigenfunctions of the Laplacian with homogeneous Neumann boundary conditions in $L^2(\Omega)$. We assume that $e_1=\mbox{const}$, and we set $X_N=\mbox{span}\{e_1,\ldots,e_N\}$ for $N\in\N$. We wish to find solutions $\phi_N$, $c_N$, and $\mu_N$ of the form
$$
  \phi_N(t,x) = \sum_{i=1}^N A_i(t)e_i(x), \quad
  c_N(t,x) = \sum_{i=1}^N B_i(t)e_i(x), \quad
  \mu_N(t,x) = \sum_{i=1}^N C_i(t)e_i(x)
$$
such that
$$
  \phi_N(0)=\sum_{i=1}^N(\phi_0,e_i)_{L^2(\Omega)}e_i,\quad c_N(0)=\sum_{i=1}^N(c_0,e_i)_{L^2(\Omega)}e_i,
$$
and for any $e\in X_N$,
\begin{align}
  \int_{\Omega}\partial_t\phi_N edx
  &= -\int_\Omega M_\delta(\phi_N)(\nabla \mu_N-[c_N]_+^\eps
  \nabla \partial_c f_\delta(\phi_N,c_N))\cdot\nabla edx, \label{a13} \\
  \int_\Omega \partial_t c_N edx
  &= \int_\Omega [c_N]_+^\eps M_\delta(\phi_N)(\nabla \mu_N-[c_N]_+^\eps\nabla \partial_c f_\delta(\phi_N,c_N))\cdot
  \nabla edx \label{a14} \\
  &\phantom{xx} -\int_\Omega [c_N]_+^\eps e^{-[\phi_N]_+^1}
  \nabla\partial_c f_\delta(\phi_N,c_N)\cdot\nabla edx
  - \delta\int_\Omega \nabla c_N\cdot \nabla edx, \nonumber \\
  \int_\Omega \mu_N edx
  &= \int_\Omega \nabla \phi_N\cdot\nabla edx
  + \int_\Omega \partial_\phi f_\delta(\phi_N,c_N)edx.\label{a15}
\end{align}
This means that we wish to find the coefficients $A_i$ and $B_i$, satisfying for $i=1,\ldots,N$ the ordinary differential equations
\begin{align}
  \frac{dA_i}{dt} &= -\int_\Omega M_\delta(\phi_N)(\nabla \mu_N-[c_N]_+^\eps\nabla \partial_c f_\delta(\phi_N,c_N))
  \cdot\nabla e_idx, \label{a3} \\
  \frac{dB_i}{dt} &= \int_\Omega [c_N]_+^\eps
  M_\delta(\phi_N)(\nabla \mu_N-[c_N]_+^\eps\nabla \partial_c f_\delta(\phi_N,c_N))\cdot \nabla e_idx \label{a4} \\
  &\phantom{xx}- \int_\Omega [c_N]_+^\eps
  e^{-[\phi_N]_+^1}\nabla\partial_c f_\delta(\phi_N,c_N)
  \cdot\nabla e_idx -\delta\int_\Omega \nabla c_N\cdot
  \nabla e_i dx, \nonumber \\
  C_i(t) &= \int_\Omega \nabla \phi_N\cdot\nabla
  e_idx + \int_\Omega \partial_\phi f_\delta(\phi_N,c_N)e_idx,
  \nonumber 
\end{align}
and the initial conditions
\begin{align}
  A_i(0)=(\phi_0,e_i)_{L^2(\Omega)},\quad
  B_i(0)=(c_0,e_i)_{L^2(\Omega)}. \label{a6}
\end{align}
Since the right-hand side of system \eqref{a3}--\eqref{a4} depends continuously on $A_i$ and $B_i$ for $i=1,\ldots,N$, Peano's theorem implies the existence of a solution to \eqref{a3}--\eqref{a6} on a time interval $[0,T']$ with $T'\le T$. Then problem \eqref{1}--\eqref{a2} possesses an approximate solution $(\phi_N,c_N,\mu_N)$ on $[0,T']$.

\subsection{Uniform estimates independent of $N$}\label{estimateforN}
In order to extend the solution constructed in the previous subsection globally, it is sufficient to show that $\phi_N$ and $c_N$ are bounded in $X_N$ on $[0,T']$.

\begin{lemma}[Energy inequality]\label{lem.ener}
Let $(\phi_N,c_N,\mu_N)\in X_N^3$ be a solution to problem \eqref{1}--\eqref{a2}. Then $(\phi_N,c_N,\mu_N)$ satisfies
\begin{align}\label{4}
  \frac{d}{dt}&\int_\Omega\bigg(\frac{1}{2}|\nabla\phi_N|^2
  + f_\delta(\phi_N,c_N) \Big)dx
  + \int_\Omega M_\delta(\phi_N)\big|\nabla\mu_N
  - [c_N]_+^\eps\nabla\partial_c f_\delta(\phi_N,c_N)\big|^2 dx \\
  & + \int_\Omega [c_N]_+^\eps e^{-[\phi_N]^1_+}
  |\nabla\partial_c f_\delta(\phi_N,c_N)|^2dx
  + \frac{\delta}{2}\int_\Omega |\nabla c_N|^2dx
  \leq\frac{\delta}{2}\int_\Omega |\nabla\phi_N|^2 dx. \nonumber
\end{align}
\end{lemma}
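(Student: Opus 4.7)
I would mimic the formal computation of Proposition~\ref{energy} directly at the Galerkin level, absorbing the artificial diffusion $\delta\Delta c$ via Young's inequality. The decisive structural observation is that, since $f_{1,\delta}$ is independent of $c$ and the nutrient part of $f_2$ is quadratic in $c$ with linear $\phi$-coupling, one has $\pa_c f_\delta(\phi_N,c_N)=c_N+1-\phi_N$. Because $e_1$ is constant and $\phi_N,c_N\in X_N$, this identity places $\pa_c f_\delta(\phi_N,c_N)$ in $X_N$. Together with $\mu_N\in X_N$ (by construction) and $\pa_t\phi_N\in X_N$ (by Peano, since the coefficients $A_i$ are $C^1$), all test functions required by the formal argument are admissible.

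The computation then proceeds as follows. Differentiating the energy in time and using the chain rule produces $\int_\Omega\na\phi_N\cdot\na\pa_t\phi_N\,dx + \int_\Omega(\pa_\phi f_\delta\,\pa_t\phi_N + \pa_c f_\delta\,\pa_t c_N)\,dx$. Testing \eqref{a15} with $e=\pa_t\phi_N$ rewrites the first integral as $\int_\Omega(\mu_N-\pa_\phi f_\delta)\pa_t\phi_N\,dx$, collapsing the right-hand side to $\int_\Omega\mu_N\pa_t\phi_N\,dx + \int_\Omega\pa_c f_\delta\,\pa_t c_N\,dx$. Next, I would test \eqref{a13} with $e=\mu_N$ and \eqref{a14} with $e=\pa_c f_\delta(\phi_N,c_N)$; upon summation, the two flux contributions telescope into the perfect square $-\int_\Omega M_\delta|\na\mu_N-[c_N]_+^\eps\na\pa_c f_\delta|^2\,dx$, while the solute-diffusion term contributes $-\int_\Omega[c_N]_+^\eps e^{-[\phi_N]_+^1}|\na\pa_c f_\delta|^2\,dx$ and a leftover $-\delta\int_\Omega\na c_N\cdot\na\pa_c f_\delta\,dx$.

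To handle the leftover, I would substitute $\pa_c f_\delta=c_N+1-\phi_N$ to obtain $-\delta\int_\Omega|\na c_N|^2\,dx + \delta\int_\Omega\na c_N\cdot\na\phi_N\,dx$; Young's inequality bounds the cross term by $\tfrac{\delta}{2}\int_\Omega|\na c_N|^2\,dx + \tfrac{\delta}{2}\int_\Omega|\na\phi_N|^2\,dx$, leaving a net $-\tfrac{\delta}{2}\int_\Omega|\na c_N|^2\,dx + \tfrac{\delta}{2}\int_\Omega|\na\phi_N|^2\,dx$, and moving the dissipative half to the left of the identity yields exactly \eqref{4}. The one delicate point is the membership $\pa_c f_\delta(\phi_N,c_N)\in X_N$, which hinges on the linear-in-$\phi$, quadratic-in-$c$ structure of the nutrient energy; for more general couplings one would have to project $\pa_c f_\delta$ onto $X_N$ and track the projection error, which is what makes the present estimate model-specific.
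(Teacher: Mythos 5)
Your proposal is correct and follows essentially the same route as the paper: testing \eqref{a13} with $\mu_N$, \eqref{a15} with $\partial_t\phi_N$, and \eqref{a14} with $\partial_c f_\delta(\phi_N,c_N)=c_N+1-\phi_N$, then absorbing the artificial-diffusion remainder $-\delta\int_\Omega\nabla c_N\cdot\nabla(c_N-\phi_N)\,dx$ by Young's inequality. Your extra remark that $\partial_c f_\delta(\phi_N,c_N)\in X_N$ thanks to $e_1=\mathrm{const}$ is a valid observation that the paper uses implicitly.
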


\begin{proof}
We choose the test functions $e=\mu_N$ in \eqref{a13} and $e=\partial_t \phi_N$ in \eqref{a15} to find that
\begin{align}\label{a7}
  \frac{1}{2}\frac{d}{dt}&\int_\Omega |\nabla \phi_N|^2dx
  + \int_\Omega\pa_\phi f_\delta(\phi_N,c_N)\pa_t\phi_Ndx
  = \int_\Omega\mu_N\pa_t\phi_N dx \\
  &= -\int_\Omega M_\delta(\phi_N)\big(|\nabla\mu_N|^2 - [c_N]_+^\eps
  \nabla\partial_c f_\delta(\phi_N,c_N)\cdot\nabla\mu_N \big)dx.
  \nonumber
\end{align}
The test function $e=\partial_cf_\delta(\phi_N,c_N)=c_N+(1-\phi_N)$ in \eqref{a14} yields
\begin{align}\label{a8}
  \int_\Omega & \partial_t c_N \partial_cf_\delta(\phi_N,c_N)dx \\
  &= \int_\Omega \big([c_N]_+^\eps M_\delta(\phi_N)\nabla\mu_N\cdot
  \nabla\partial_cf_\delta(\phi_N,c_N)
  - M_\delta(\phi_N)\big|[c_N]_+^\eps\nabla\partial_c
  f_\delta(\phi_N,c_N)\big|^2 \big)dx \nonumber \\
  &\phantom{xx}- \int_\Omega [c_N]_+^\eps e^{-[\phi_N]_+^1}
  |\nabla\partial_cf_\delta(\phi_N,c_N)|^2dx
  - \delta\int_\Omega \nabla c_N\cdot
  \nabla\partial_c f_\delta(\phi_N,c_N)dx. \nonumber
\end{align}
Summing \eqref{a7} and \eqref{a8}, some terms can be written as a square, and we end up with
\begin{align*}
  \frac{d}{dt}&\int_\Omega\bigg(\frac{1}{2}|\nabla\phi_N|^2
  + f_\delta(\phi_N,c_N) \bigg)dx + \int_\Omega M_\delta(\phi_N)
  \big|\nabla \mu_N-[c_N]_+^\eps\nabla\partial_c
  f_\delta(\phi_N,c_N)\big|^2dx \\
  &\phantom{xx}+ \int_\Omega [c_N]_+^\eps e^{-[\phi_N]^1_+}
  |\nabla\partial_c f_\delta(\phi_N,c_N)|^2dx
  = -\delta\int_\Omega \nabla c_N\cdot \nabla\partial_c
  f_\delta(\phi_N,c_N)dx. \nonumber \\
  &= -\delta\int_\Omega\na c_N\cdot\na(c_N-\phi_N)dx
  \le -\frac{\delta}{2}\int_\Omega|\nabla c_N|^2dx
  + \frac{\delta}{2}\int_\Omega |\nabla\phi_N|^2dx, \nonumber
\end{align*}
where we used Young's inequality in the last step. This finishes the proof.
\end{proof}

The energy inequality \eqref{4} allows us to conclude some a priori estimates.

\begin{lemma}\label{lemma1}{\rm{(Estimates for $\phi_N$ and $c_N$).}}
Let $\delta\leq 1/12$. There exists a constant $C>0$ independent of $N$ such that
\begin{align}
  \big\|\sqrt{M_\delta(\phi_N)}\big|\nabla \mu_N-[c_N]_+^\eps\nabla
  \partial_cf_\delta(\phi_N,c_N)\big|\big\|_{L^2((0,T')\times\Omega)}
  &\leq C, \label{5} \\
  \|\nabla\phi_N\|_{L^\infty(0,T';L^2(\Omega))}
  + \big\|\sqrt{[c_N]_+^\eps}\nabla\partial_c f_\delta(\phi_N,c_N)
  \big\|_{L^2((0,T')\times\Omega)}
  &\leq C, \label{6} \\
  \|\nabla c_N\|_{L^2((0,T')\times\Omega)}
  + \|c_N\|_{L^\infty(0,T';L^2(\Omega))}
  &\leq C. \label{7}
\end{align}
\end{lemma}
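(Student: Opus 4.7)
The plan is to extract all three sets of bounds from the energy inequality \eqref{4} of Lemma \ref{lem.ener} by combining a coercivity lower bound for $f_\delta$ with Gr\"onwall's lemma. Setting
\[
  E_N(t) := \int_\Omega\Big(\tfrac12|\na\phi_N|^2 + f_\delta(\phi_N,c_N)\Big)dx,
\]
inequality \eqref{4} reads $E_N'(t) + D_N(t) + \frac{\delta}{2}\|\na c_N\|_{L^2}^2 \leq \frac{\delta}{2}\|\na\phi_N\|_{L^2}^2$, where $D_N(t)$ collects the two nonnegative dissipation terms that appear in \eqref{5} and in the second summand of \eqref{6}. The strategy is first to control $E_N$ (and hence $\|\na\phi_N\|_{L^2}$) via Gr\"onwall, and then to reintegrate in time to pick up the dissipation bounds.

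The first step is a pointwise inequality $f_\delta(\phi,c) \ge \frac{c^2}{4} - C_\delta$ valid for all $(\phi,c)\in\R^2$. This follows by splitting the nutrient cross term through Young, $c(1-\phi) \ge -\frac{c^2}{4} - (1-\phi)^2$, which leads to $f_\delta(\phi,c) \ge f_{1,\delta}(\phi) - 2\phi^2 + 3\phi - 1 + \frac{c^2}{4}$. Outside $[\delta,1-\delta]$ the regularized logarithm $f_{1,\delta}$ is a quadratic with leading coefficient $\frac{1}{2\delta(1-\delta)}$, which exceeds $2$ precisely when $\delta(1-\delta)<\frac14$; for $\delta\le 1/12$ this coefficient is at least $72/11$, so the $\phi$-dependent remainder is bounded below by some constant $-C_\delta$. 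Consequently $\int_\Omega f_\delta\,dx \ge -C_\delta|\Omega|$ and therefore $\frac12\|\na\phi_N\|_{L^2}^2 \le E_N(t) + C_\delta|\Omega|$.

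Substituting this estimate into the right-hand side of \eqref{4} and discarding the nonnegative dissipation terms on the left yields the differential inequality $E_N'(t) \le \delta E_N(t) + \delta C_\delta|\Omega|$. Gr\"onwall's lemma then gives $\sup_{[0,T']} E_N(t) \le (E_N(0) + C_\delta T)e^{\delta T}$, and $E_N(0)$ is controlled uniformly in $N$ because $\phi_N(0)\to\phi_0$ in $H^1(\Omega)$, $c_N(0)\to c_0$ in $L^2(\Omega)$, and $f_\delta$ has at most quadratic growth in its arguments. Using the coercivity bound once more, this yields the $L^\infty(0,T';L^2(\Omega))$ bounds on $\na\phi_N$ in \eqref{6} and on $c_N$ in \eqref{7}. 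Plugging these bounds back into \eqref{4} and integrating over $(0,T')$ now dominates the right-hand side by a constant, so the two dissipation integrals on the left are controlled, giving \eqref{5}, the second term of \eqref{6}, and the $L^2$ bound on $\na c_N$ in \eqref{7}.

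The main obstacle in this argument is the pointwise coercivity $f_\delta(\phi,c)\ge \frac{c^2}{4}-C_\delta$: it is the only place where the hypothesis $\delta\le 1/12$ is really used, and its purpose is to force the quadratic regularization of the Flory--Huggins term to dominate the concave contributions $\phi(1-\phi) - (1-\phi)^2$ that come from the nonconvex interaction energy and from the Young splitting of the nutrient cross term. Everything else is a mechanical combination of Gr\"onwall and integration, once this pointwise lower bound is available.
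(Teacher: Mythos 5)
Your proposal is correct and follows essentially the same route as the paper: all three estimates are extracted from the energy inequality \eqref{4} by bounding $f_\delta$ from below (using $\delta\le 1/12$ so that the quadratic extension of the Flory--Huggins part, with coefficient $\tfrac12 f_1''(\delta)=\tfrac{1}{2\delta(1-\delta)}$, dominates the indefinite terms coming from $\phi(1-\phi)$ and the Young splitting of $c(1-\phi)$), followed by Gr\"onwall for the $\tfrac{\delta}{2}\|\nabla\phi_N\|_{L^2}^2$ term and a reintegration to recover the dissipation bounds. The only difference is presentational: you package the paper's case-by-case estimates into a single pointwise coercivity inequality $f_\delta(\phi,c)\ge \tfrac{c^2}{4}-C_\delta$, which is a slightly cleaner organization of the same argument.
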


\begin{proof}
The estimates basically follow from the energy inequality \eqref{4}, but we need to estimate $f_{1,\delta}(\phi_N)$ and $f_2(\phi_N,c_N)$, as $\phi_N$ and $c_N$ may be negative. Let $\phi_N<\delta$. We divide the term $f_{1,\delta}(\phi_N)$ into two parts. The first part $f_1(\delta)$ is bounded uniformly in $N$ and $\delta$, while a simple calculation shows that the second part $f_1'(\delta)(\phi_N-\delta) +\frac{1}{2}f_1''(\delta)(\phi_N-\delta)^2$ is nonnegative. If $\delta\le\phi_N\le1-\delta$, the term $f_{1,\delta}(\phi_N)=f_1(\phi_N)$ is bounded uniformly in $N$ and $\delta$. 
The case $\phi_N>1-\delta$ can be treated similarly as the case $\phi_N<\delta$. We turn to the estimate of $f_2(\phi_N,c_N)$. The first term on the right-hand side of
$$
  -\int_\Omega f_2(\phi_N,c_N)dx
  = -\int_\Omega\phi_N(1-\phi_N)dx - \int_\Omega c_N(1-\phi_N)dx
$$
is uniformly bounded with respect to $N$, while the second term is estimated according to
$$
  -\int_\Omega c_N(1-\phi_N)dx \le \frac14\int_\Omega c_N^2 dx
  + \int_\Omega(1-\phi_N)^2dx.
$$
The first term on the right-hand side can be treated in view of inequality \eqref{4} by means of Gronwall's inequality. For the last term, we distinguish several cases. If $\delta\le\phi_N\le 1-\delta$, we have $(1-\phi_N)^2\le 1$; if $\phi_N<\delta$, we choose $\delta\le 1/12$ to find that $\delta(1-\delta)\le 1/12$ and consequently,
$$
  (1-\phi_N)^2 \le 2(\delta-\phi_N)^2 + 2(1-\delta)^2
  \le \frac{(\phi_N-\delta)^2}{6\delta(1-\delta)} + 2,
$$
which is bounded in view of $f_1''(\delta)=1/(\delta(1-\delta))$ and the definition of $f_{1,\delta}$; finally, if $\phi_N>1-\delta$, we obtain in a similar way
$$
  (1-\phi_N)^2 \le \frac{(\phi_N-(1-\delta))^2}{6\delta(1-\delta)} + 2.
$$
The terms involving $\phi_N$ can be treated by taking into account $f_{1,\delta}(\phi_N)$ and Gronwall's lemma.
\end{proof}

Lemma \ref{lemma1} provides an $H^1(\Omega)$ bound for $c_N$. We can also derive such bounds for $\phi_N$ and $\mu_N$.

\begin{lemma}[$H^1(\Omega)$ estimates for $\phi_N$ and $\mu_N$]
\label{lemma2}
For $\delta\le 1/12$, there exists a constant $C>0$ independent of $N$ such that
\begin{align}
  \|\phi_N\|_{L^\infty(0,T';H^1(\Omega))}
  + \|\mu_N\|_{L^2(0,T';H^1(\Omega))}\leq C. \label{8}
\end{align}
\end{lemma}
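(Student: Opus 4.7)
The plan is to use the conservation of mass to control the mean values of $\phi_N$ and $\mu_N$, and then to deduce the $H^1$ bounds from the gradient estimates already available in Lemma~\ref{lemma1} via the Poincar\'e--Wirtinger inequality. The constants in what follows will depend on $\delta$ and $\eps$ but not on $N$, which is all that is required here.

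First I would test \eqref{a13} with the constant eigenfunction $e=e_1\in X_N$. Since $\nabla e_1=0$, the right-hand side vanishes and one obtains $\frac{d}{dt}\int_\Omega \phi_N\,dx=0$, so the mean value $\overline{\phi_N}(t):=|\Omega|^{-1}\int_\Omega \phi_N\,dx$ equals $\overline{\phi_0}$, which is bounded by the data. Combined with the bound $\|\nabla \phi_N\|_{L^\infty(0,T';L^2)}\le C$ from \eqref{6} and the Poincar\'e--Wirtinger inequality $\|\phi_N-\overline{\phi_N}\|_{L^2}\le C\|\nabla\phi_N\|_{L^2}$, this yields the desired $L^\infty(0,T';H^1)$ estimate for $\phi_N$.

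Next, for the $L^2(0,T';H^1)$ bound on $\mu_N$, I would first derive an $L^2(L^2)$ estimate on $\nabla\mu_N$. The key point is that $M_\delta$ has a uniform positive lower bound: $M_\delta(\phi)\ge M(\delta)=\delta^2(1-\delta)^2>0$. Writing
$$
\sqrt{M_\delta(\phi_N)}\,\nabla\mu_N
=\sqrt{M_\delta(\phi_N)}\bigl(\nabla\mu_N-[c_N]_+^\eps\nabla\partial_c f_\delta(\phi_N,c_N)\bigr)
+\sqrt{M_\delta(\phi_N)}\,[c_N]_+^\eps\nabla\partial_c f_\delta(\phi_N,c_N),
$$
the first term is bounded in $L^2((0,T')\times\Omega)$ by \eqref{5}, and the second is controlled using the uniform upper bound on $M_\delta$, the truncation $[c_N]_+^\eps\le 1/\eps$, the explicit identity $\partial_c f_\delta(\phi,c)=c+(1-\phi)$, and the $L^2(L^2)$ bounds on $\nabla c_N$ from \eqref{7} and $\nabla\phi_N$ from \eqref{6}. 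Dividing by $\sqrt{M(\delta)}$ then gives a uniform-in-$N$ bound on $\|\nabla\mu_N\|_{L^2((0,T')\times\Omega)}$.

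Finally, to bound $\mu_N$ itself, I would test \eqref{a15} with the constant $e=e_1$, which gives $\int_\Omega\mu_N\,dx=\int_\Omega\partial_\phi f_\delta(\phi_N,c_N)\,dx$ since $\nabla e_1=0$. Because $f_{1,\delta}'$ is bounded on $[\delta,1-\delta]$ and extends affinely outside, one has $|f_{1,\delta}'(\phi_N)|\le C_\delta(1+|\phi_N|)$, while $\partial_\phi f_2(\phi_N,c_N)=1-2\phi_N-c_N$. Cauchy--Schwarz, together with the $L^\infty(0,T';L^2)$ bounds on $\phi_N$ (just obtained) and $c_N$ (from \eqref{7}), then shows that $t\mapsto\int_\Omega\mu_N(t)\,dx$ is bounded in $L^\infty(0,T')$. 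A final application of Poincar\'e--Wirtinger to $\mu_N-\overline{\mu_N}$ upgrades the $L^2(L^2)$ gradient estimate to the claimed $L^2(0,T';H^1)$ bound on $\mu_N$. The proof is essentially quantitative bookkeeping; the only subtlety is ensuring that the two mean-value identities are legitimate, which is guaranteed by the assumption $e_1=\mathrm{const}\in X_N$ for every $N$.
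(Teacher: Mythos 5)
Your proof is correct and follows essentially the same route as the paper: mass conservation via the constant test function plus Poincar\'e--Wirtinger for $\phi_N$, the decomposition of $\sqrt{M_\delta(\phi_N)}\nabla\mu_N$ together with the lower bound $M_\delta\ge M(\delta)$ for the gradient of $\mu_N$, and the constant test function in \eqref{a15} plus Poincar\'e--Wirtinger again for $\mu_N$ itself. The only (immaterial) difference is that you control the cross term through $[c_N]_+^\eps\le 1/\eps$ and the bounds on $\nabla c_N$, $\nabla\phi_N$, whereas the paper uses $\|\sqrt{[c_N]_+^\eps}\|_{L^\infty}$ together with estimate \eqref{6}.
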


\begin{proof}
The test function $e=1$ in \eqref{a13} yields conservation of the total fiber phase, $\int_\Omega\phi(x,t)dx=\int_\Omega\phi_0(x)dx$ for all $0\le t\le T'$. Then estimate \eqref{6} for $\na\phi_N$ and the Poincar\'e--Wirtinger inequality lead to an $H^1(\Omega)$ bound for $\phi_N$, showing the first claim. Next, we deduce from bounds \eqref{6} and \eqref{7} that
\begin{align*}
  \big\|\sqrt{M_\delta(\phi_N)}&\na\mu_N
  \big\|_{L^2((0,T')\times\Omega)}
  \le \big\|\sqrt{M_\delta(\phi_N)}\big|\nabla \mu_N-[c_N]_+^\eps\nabla
  \partial_cf_\delta(\phi_N,c_N)\big|\big\|_{L^2((0,T')\times\Omega)} \\
  &+ C\big\|\sqrt{[c_N]_+^\eps}\big\|_{L^\infty((0,T')\times\Omega)}
  \big\|\sqrt{[c_N]_+^\eps}\na\pa_c f_\delta(\phi_N,c_N)
  \big\|_{L^2((0,T')\times\Omega)} \le C,
\end{align*}
which gives a uniform $L^2(\Omega)$ bound for $\na\mu_N$, since $M(\phi_N)\ge M(\delta)>0$. The test function $e=1$ in equation \eqref{a15} for $\mu_N$ gives a uniform bound for $\int_\Omega \mu_N dx$ uniformly in time. Therefore, using the Poincar\'e--Wirtinger inequality again shows that $\mu_N$ is uniformly bounded in $L^2(0,T';H^1(\Omega))$, proving the second claim.
\end{proof}

The uniform $L^2(\Omega)$ bounds for $(\phi_N,c_N)$ imply that the approximate solution $(\phi_N,c_N)$ to \eqref{a13}--\eqref{a15} exists on the whole interval $[0,T]$, and estimates \eqref{5}-\eqref{8} hold on that interval. For the limit $N\to\infty$, we need bounds for the time derivatives.

\begin{lemma}[Estimates for the time derivatives]\label{lemma3}
For $\delta\le 1/12$, there exists a constant $C>0$ independent of $N$ such that
\begin{align}\label{a29}
  \|\partial_t\phi_N\|_{L^2(0,T;H^{1}(\Omega)')}
  + \|\partial_tc_N\|_{L^{2}(0,T;H^{1}(\Omega)')} \leq C.
\end{align}
\end{lemma}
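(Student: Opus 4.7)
The plan is to bound $\partial_t\phi_N$ and $\partial_t c_N$ by duality, testing the Galerkin identities \eqref{a13} and \eqref{a14} against the $L^2$-projection of an arbitrary $\psi\in L^2(0,T;H^1(\Omega))$ onto $X_N$. Let $\Pi_N:L^2(\Omega)\to X_N$ denote this projection. Since the eigenfunctions $(e_i)$ of the Neumann Laplacian are orthogonal both in $L^2(\Omega)$ and in $H^1(\Omega)$ (with $\int_\Omega\nabla e_i\cdot\nabla e_j dx=\lambda_i\delta_{ij}$), the projection satisfies $\|\Pi_N\psi\|_{H^1(\Omega)}\le\|\psi\|_{H^1(\Omega)}$. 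Since $\partial_t\phi_N,\partial_t c_N\in X_N$, we have $\langle\partial_t\phi_N,\psi\rangle=(\partial_t\phi_N,\Pi_N\psi)_{L^2(\Omega)}$ and similarly for $c_N$, so \eqref{a13}--\eqref{a14} are available with $e=\Pi_N\psi$.

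For $\partial_t\phi_N$, the identity \eqref{a13} together with Cauchy--Schwarz gives
\begin{align*}
\Big|\int_0^T\langle\partial_t\phi_N,\psi\rangle dt\Big|
&\le \|\sqrt{M_\delta(\phi_N)}\|_{L^\infty}
\big\|\sqrt{M_\delta(\phi_N)}\big(\nabla\mu_N-[c_N]_+^\eps\nabla\partial_c f_\delta(\phi_N,c_N)\big)\big\|_{L^2((0,T)\times\Omega)} \\
&\quad\times \|\nabla\Pi_N\psi\|_{L^2((0,T)\times\Omega)},
\end{align*}
and the first two factors are bounded uniformly in $N$ thanks to the boundedness of $M_\delta$ and estimate \eqref{5}. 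This yields $\|\partial_t\phi_N\|_{L^2(0,T;H^1(\Omega)')}\le C$.

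For $\partial_t c_N$, I would split the right-hand side of \eqref{a14} into three pieces. The first is estimated as above, with the additional bounded factor $\|[c_N]_+^\eps\|_{L^\infty}\le 1/\eps$, using \eqref{5} again. For the second, writing it as $\int\sqrt{[c_N]_+^\eps}\,e^{-[\phi_N]_+^1}\cdot\sqrt{[c_N]_+^\eps}\nabla\partial_c f_\delta(\phi_N,c_N)\cdot\nabla\Pi_N\psi\,dxdt$, Cauchy--Schwarz and the bound \eqref{6} (together with $\sqrt{[c_N]_+^\eps}\le 1/\sqrt\eps$ and $e^{-[\phi_N]_+^1}\le 1$) handle it. The last piece $\delta\int\nabla c_N\cdot\nabla\Pi_N\psi\,dxdt$ is controlled by the $\nabla c_N$ bound from \eqref{7}. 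Summing and using $\|\Pi_N\psi\|_{H^1}\le\|\psi\|_{H^1}$ gives $\|\partial_t c_N\|_{L^2(0,T;H^1(\Omega)')}\le C$.

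No genuine obstacle arises here; the argument is pure duality and reuse of the previous lemmas. The only point worth care is checking that the $H^1$-stability of the Galerkin projector $\Pi_N$ (needed to absorb test functions that do not live in $X_N$) follows from the orthogonality of the Neumann eigenbasis in both $L^2(\Omega)$ and the $H^1(\Omega)$ inner product; once this is observed, the two bounds in \eqref{a29} follow immediately from \eqref{5}, \eqref{6}, \eqref{7}, and the uniform boundedness of the truncated coefficients $M_\delta(\phi_N)$, $[c_N]_+^\eps$, $e^{-[\phi_N]_+^1}$.
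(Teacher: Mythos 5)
Your proposal is correct and follows essentially the same route as the paper: a duality argument testing \eqref{a13}--\eqref{a14} with $\Pi_N\psi$, splitting the $c$-equation into the three fluxes, and invoking \eqref{5}--\eqref{7} together with the $L^\infty$ bounds of the truncated coefficients (with constants allowed to depend on $\eps,\delta$). Your explicit remark on the $H^1$-stability of the spectral projector $\Pi_N$ makes precise a step the paper uses only implicitly.
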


\begin{proof}
Let $\Pi_N$ denote the projection of $L^2(\Omega)$ onto $X_N=\mbox{span}\{e_1,\ldots,e_N\}$.
Based on the estimates obtained in Lemmas \ref{lemma1}--\ref{lemma2}, we have, for any $\psi_1\in L^2(0,T;H^1(\Omega))$,
\begin{align*}
  \bigg|\int_0^T&\int_\Omega \partial_t\phi_N\psi_1dxdt\bigg|
  = \bigg|\int_0^T\int_\Omega \partial_t\phi_N\Pi_N\psi_1dxdt\bigg| \\
  &\le \big\|\sqrt{M_\delta(\phi_N)}\big\|_{L^\infty(\Omega_T)}
  \big\|\sqrt{M_\delta(\phi_N)}
  \big|\nabla \mu_N-[c_N]_+^\varepsilon\nabla \partial_c f_\delta(\phi_N,c_N)\big|\big\|_{L^2(\Omega_T)} \\
  &\phantom{xx}\times\|\nabla \Pi_N\psi_1\|_{L^2(\Omega_T)}
  \leq C\|\psi_1\|_{L^2(0,T;H^1(\Omega))},
\end{align*}
which proves the first claim. Similarly, for any $\psi_2\in L^2(0,T;H^{1}(\Omega))$,
\begin{align*}
  \bigg|\int_0^T&\int_\Omega \partial_tc_N\psi_2dxdt\bigg|
  = \bigg|\int_0^T\int_\Omega \partial_tc_N\Pi_N\psi_2dxdt\bigg| \\
  &\leq \|[c_N]_+^\varepsilon\|_{L^\infty(\Omega_T)} \big\|\sqrt{M_\delta(\phi_N)}\big\|_{L^\infty(\Omega_T)}
  \|\nabla\Pi_N\psi_2\|_{L^2(\Omega_T)}\\
  &\phantom{xx}\times\big\|\sqrt{M_\delta(\phi_N)}\big|\nabla \mu_N-[c_N]_+^\varepsilon\nabla \partial_c f_\delta(\phi_N,c_N)\big|\big\|_{L^2(\Omega_T)} \\
  &\phantom{xx}+\big\|\sqrt{[c_N]_+^\varepsilon}\big\|_{L^\infty(\Omega_T)} \|e^{-[\phi_N]_+^1}\|_{L^\infty(\Omega_T)}
  \|\nabla \Pi_N\psi_2\|_{L^2(\Omega_T)} \\
  &\phantom{xx} + \big\|\sqrt{[c_N]_+^\varepsilon}\nabla\partial_c f_\delta(\phi_N,c_N)\big\|_{L^2(\Omega_T)} + \delta\| \nabla c_N\|_{L^2(\Omega_T)}\| \nabla \Pi_N\psi_2\|_{L^2(\Omega_T)} \\
  &\leq C\|\psi_2\|_{L^2(0,T;H^{1}(\Omega))}.
\end{align*}
This finishes the proof of Lemma \ref{lemma3}.
\end{proof}


\subsection{Compactness argument}\label{limitN}

Estimates \eqref{7} and \eqref{8} allow us to extract subsequences $(\phi_N,c_N,\mu_N)$ (not relabeled) such that as $N\to\infty$, for some functions $\phi$, $c$ and $\mu$,
\begin{align}
  \phi_N\stackrel{*}{\rightharpoonup} \phi &\quad\mbox{weakly in }
  L^\infty(0,T;H^1(\Omega)), \label{9} \\
  c_N\rightharpoonup  c &\quad\mbox{weakly in } L^2(0,T;H^1(\Omega)),
  \label{10} \\
  \mu_N\rightharpoonup \mu &\quad\mbox{weakly in } L^2(0,T;H^1(\Omega)).
  \label{a21}
\end{align}
The estimates for the time derivatives \eqref{a29} yield
\begin{align*}
  \partial_t\phi_N\rightharpoonup \partial_t\phi, \quad
  \partial_tc_N\rightharpoonup \partial_tc
  \quad\mbox{weakly in }L^2(0,T;H^{1}(\Omega)'). 
\end{align*}
We deduce from the Aubin--Lions compactness lemma, in view of estimates \eqref{7}, \eqref{8}, and \eqref{a29}, that, up to a subsequence,
\begin{align}
  \phi_N\rightarrow \phi &\quad\mbox{strongly in }
  C([0,T];L^2(\Omega)), \label{11} \\
  c_N\rightarrow c &\quad\mbox{strongly in }
  L^2(\Omega_T)\cap C([0,T];H^1(\Omega)'). \label{12}
\end{align}
It follows from \eqref{11} and the continuity of $M_\delta(\cdot)$ that
$M_\delta(\phi_N)\rightarrow M_\delta(\phi)$ a.e.\ in $\Omega_T$.
Thus, in accordance with the bound $\|M_\delta(\phi_N)\|_{L^\infty(\Omega_T)}\leq C$,
we infer that
\begin{align}
  M_\delta(\phi_N)\rightarrow M_\delta(\phi)\quad\mbox{strongly in }
  L^p(\Omega_T)\mbox{ for any }p\in[1,\infty). \label{13}
\end{align}
Similar arguments give
\begin{align}
  e^{-[\phi_N]_+^1}\rightarrow e^{-[\phi]_+^1},\quad
  [c_N]_+^\varepsilon\rightarrow [c]_+^\varepsilon
  \quad\mbox{strongly in }L^p(\Omega_T)\mbox{ for any } p\in[1,\infty). \label{a31}
\end{align}
Then it follows from \eqref{5}, \eqref{9}-\eqref{a21}, \eqref{13}, and \eqref{a31} that
\begin{align*}
  M_\delta(&\phi_N)\big(\nabla \mu_N-[c_N]_+^\varepsilon
  (\nabla c_N-\nabla \phi_N) \big) \\
  &\rightharpoonup M_\delta(\phi)\big(\nabla \mu-[c]_+^\varepsilon
  (\nabla c-\nabla \phi) \big)\quad\mbox{weakly in } L^2(\Omega_T). \nonumber
\end{align*}

The partial derivative $\pa_\phi f_\delta$ is continuous in $\R^2$, thanks to the cutoff, and we conclude from \eqref{11}-\eqref{12} that
\begin{align*}
  \partial_\phi f_\delta(\phi_N,c_N)\rightarrow
  \partial_\phi f_\delta(\phi,c)\quad\mbox{a.e.\ in }\Omega_T.
\end{align*}
Taking into account the interpolation inequality in Lebesgue spaces, bounds \eqref{7} for $c_N$, and the continuous embedding $H^1(\Omega)\hookrightarrow L^{2d/(d-2)}(\Omega)$, we find that
\begin{align}\label{a11}
  \|c_N\|_{L^4(0,T;L^{2d/(d-1)}(\Omega))}
  &\le \bigg(\int_0^T\|c_N\|_{L^2(\Omega)}^2
  \|c_N\|_{L^{2d/(d-2)}(\Omega)}^2 dt\bigg)^{1/4} \\
  &\leq \|c_N\|_{L^\infty(0,T;L^2(\Omega))}^{1/2}
  \|c_N\|_{L^2(0,T;L^{2d/(d-2)}(\Omega))}^{1/2}\leq C. \nonumber
\end{align}
By definition of $f_\delta$, this estimate and bound \eqref{8} yield
\begin{align*}
  \|\partial_\phi f_\delta(\phi_N,c_N)
  \|_{L^4(0,T;L^{{2d/(d-2)}}(\Omega))}
  &\leq C\|\phi_N\|_{L^\infty(0,T;L^{2d/(d-2)}(\Omega))} \\
  &\phantom{xx}+ C\|c_N\|_{L^4(0,T;L^{{2d/(d-2)}}(\Omega))} + C
  \leq C.
\end{align*}
Therefore, we achieve the strong convergence
\begin{align*}
  \partial_\phi f_\delta(\phi_N,c_N)\rightarrow \partial_\phi f_\delta(\phi,c) \quad\mbox{strongly in }L^2(\Omega_T).
\end{align*}

By the previous convergence results, we can take the limit $N\to\infty$ in system \eqref{a13}-\eqref{a15} for $(\phi_N,c_N,\mu_N)$, and the limit functions $(\phi,c,\mu)$ solve
\begin{align}
  \int_0^T&\langle\partial_t\phi, \psi_1\rangle_1dt
  = -\int_0^T\int_\Omega M_\delta(\phi)
  (\nabla \mu-[c]_+^\varepsilon\nabla \partial_c
  f_\delta(\phi,c))\cdot\nabla \psi_1 dxdt, \label{a16} \\
  \int_0^T&\langle \partial_t c, \psi_2\rangle_1\,dt
  = \int_0^T\int_\Omega [c]_+^\varepsilon M_\delta(\phi)
  (\nabla \mu-[c]_+^\varepsilon\nabla \partial_c f_\delta(\phi,c))
  \cdot \nabla \psi_2dxdt \label{a17} \\
  &\phantom{xx}- \int_0^T\int_\Omega [c]_+^\varepsilon
  e^{-[\phi]_+^1}\nabla\partial_c f_\delta(\phi,c)\cdot\nabla
  \psi_2dxdt - \delta\int_0^T\int_\Omega \nabla c\cdot \nabla
  \psi_2 dxdt, \nonumber \\
  \int_0^T&\int_\Omega \mu \psi_3dxdt
  = \int_0^T\int_\Omega \nabla \phi\cdot\nabla \psi_3dxdt
  + \int_0^T\int_\Omega \partial_\phi f_\delta(\phi,c)\psi_3dxdt
  \label{a18}
\end{align}
for all test functions $\psi_1$, $\psi_2$, $\psi_3\in L^2(0,T;H^1(\Omega))$, recalling that $\langle\cdot,\cdot\rangle_1$ is the dual product between $H^1(\Omega)'$ and $H^1(\Omega)$.

It remains to show that $\phi$ and $c$ attain the initial conditions.
We deduce from \eqref{11} and the fact that $\phi_N(0)\rightarrow \phi_0$ strongly in $L^2(\Omega)$ that $\phi(0)=\phi_0$ in $\Omega$.
The strong convergence of $c_N$ to $c$ in $C([0,T];H^1(\Omega)')$ implies that $\langle c_N(0),\xi\rangle_1\rightarrow \langle c(0),\xi\rangle_1$
for any $\xi\in H^1(\Omega)$. We combine this result and $c_N(0)\rightarrow c_0$ in $L^2(\Omega)$ to find that $\langle c(0),\xi\rangle_1=\langle c_0,\xi\rangle_1$.


\section{The limit $\varepsilon\rightarrow 0$}\label{limitvare}

In this section, we derive a lower bound for $c$ and perform the limit $\varepsilon\rightarrow 0$ in equations \eqref{a16}-\eqref{a18}.
We denote by $(\phi_\varepsilon,c_\varepsilon)$ the solution at this level of approximation.

\subsection{Uniform estimates independent of $\varepsilon$}

We start with the lower bound for $c_\varepsilon$.

\begin{lemma}[Lower bound for $c_\varepsilon$]
Let $(\phi_\varepsilon,c_\varepsilon)$ be a solution to \eqref{a16}-\eqref{a18}. Then
\begin{align*}
 c_\varepsilon(t,x)\geq 0\quad\mbox{a.e.\ in }\Omega_T.
\end{align*}
\end{lemma}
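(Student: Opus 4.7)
The plan is to test the weak formulation \eqref{a17} of the equation for $c_\varepsilon$ against its negative part $c_\varepsilon^- := \min\{c_\varepsilon, 0\}$. Since $c_\varepsilon \in L^2(0,T;H^1(\Omega))$, the function $c_\varepsilon^-$ belongs to the same space with $\nabla c_\varepsilon^- = \mathbf{1}_{\{c_\varepsilon<0\}}\nabla c_\varepsilon$ a.e. The crucial structural observation is that the truncation
\begin{equation*}
[c_\varepsilon]_+^\varepsilon = \min\{1/\varepsilon,\max\{0,c_\varepsilon\}\}
\end{equation*}
vanishes identically on $\{c_\varepsilon \leq 0\}$, which contains the support of $\nabla c_\varepsilon^-$. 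Consequently, every one of the three integrals in \eqref{a17} carrying the factor $[c_\varepsilon]_+^\varepsilon$ produces zero when paired against $\nabla c_\varepsilon^-$, and only the artificial diffusion term $-\delta\int\nabla c_\varepsilon\cdot\nabla c_\varepsilon^-$ survives.

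Concretely, inserting $\psi_2 = c_\varepsilon^-\mathbf{1}_{[0,t]}$ into \eqref{a17} and using the Lions chain rule (valid because $c_\varepsilon \in L^2(0,T;H^1(\Omega))$, $\partial_t c_\varepsilon \in L^2(0,T;H^1(\Omega)')$, and $s\mapsto\min\{s,0\}$ is Lipschitz) to rewrite the time-derivative pairing, one obtains
\begin{equation*}
\frac{1}{2}\int_\Omega\big(c_\varepsilon^-(t)\big)^2 dx - \frac{1}{2}\int_\Omega\big(c_0^-\big)^2 dx + \delta\int_0^t\int_\Omega|\nabla c_\varepsilon^-|^2 dx ds = 0,
\end{equation*}
since $\nabla c_\varepsilon\cdot\nabla c_\varepsilon^- = |\nabla c_\varepsilon^-|^2$. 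The hypothesis $c_0 \geq 0$ in $\Omega$ gives $c_0^- = 0$, so both remaining terms on the left are nonnegative and sum to zero. Hence $c_\varepsilon^-(t) = 0$ a.e.\ in $\Omega$ for every $t\in[0,T]$, i.e., $c_\varepsilon \geq 0$ a.e.\ in $\Omega_T$.

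I do not anticipate any substantive obstacle: the approximate system \eqref{2} was engineered precisely to support this sign argument, with the truncation $[\cdot]_+^\varepsilon$ neutralizing the nonlinear transport on $\{c<0\}$ and the artificial diffusion $\delta\Delta c$ supplying the coercive gradient term that closes the estimate. The only technical point requiring care is the chain-rule identity $\tfrac{1}{2}\tfrac{d}{dt}\|c_\varepsilon^-\|_{L^2(\Omega)}^2 = \langle\partial_t c_\varepsilon, c_\varepsilon^-\rangle_1$ in the Gelfand-triple setting; if one wishes to avoid invoking it as a black box, the function $\min\{\cdot,0\}$ may be approximated by a family of $C^1$ convex functions $\chi_\eta$, the identity $\int_0^t\langle\partial_t c_\varepsilon, \chi_\eta(c_\varepsilon)\rangle_1 ds = \int_\Omega\Xi_\eta(c_\varepsilon(t))dx - \int_\Omega\Xi_\eta(c_0)dx$ applied to the primitive $\Xi_\eta$ of $\chi_\eta$, and the limit $\eta\to 0$ taken at the end.
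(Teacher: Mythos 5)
Your proof is correct and takes essentially the same route as the paper, which tests \eqref{a17} with $[c_\varepsilon]_-:=-\min\{0,c_\varepsilon\}$ (the sign-flipped version of your $c_\varepsilon^-$) and arrives at the same identity $\tfrac12\int_\Omega[c_\varepsilon]_-^2(\tau)\,dx+\delta\int_0^\tau\int_\Omega|\nabla[c_\varepsilon]_-|^2\,dx\,dt=0$, using $c_0\ge 0$ and the vanishing of $[c_\varepsilon]_+^\varepsilon$ on $\{c_\varepsilon\le 0\}$. Your extra care about the chain rule $\langle\partial_t c_\varepsilon,c_\varepsilon^-\rangle_1=\tfrac12\tfrac{d}{dt}\|c_\varepsilon^-\|_{L^2(\Omega)}^2$ in the Gelfand-triple setting is a legitimate refinement of a point the paper leaves implicit.
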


\begin{proof}
The proof is easy since it is sufficient to take the test function $\psi_2=[c_\varepsilon]_-:=-\min\{0,c_\varepsilon\}$ in \eqref{a17}, which yields
\begin{align*}
  \frac{1}{2}\int_\Omega [c_\varepsilon]_-^2(\tau,x)dx
  + \delta\int_0^\tau\int_\Omega |\nabla[c_\varepsilon]_-|^2dxdt = 0,
\end{align*}
and hence $[c_\varepsilon]_-=0$ a.e.\ in $\Omega_T$, finishing the proof.
\end{proof}

The previous lemma shows that we can replace the truncation $[c_\eps]_+^\eps$ by $[c_\eps]^\eps:=\min\{1/\eps$, $c_\eps\}$, and it remains to remove the upper truncation.

We claim that $(\phi_\eps,c_\eps)$ satisfies an energy inequality similar to \eqref{4}. As a preparation, by elliptic regularity theory, we conclude from  $\Delta\phi_\varepsilon=-\mu_\varepsilon+\partial_\phi f_\delta(\phi_\varepsilon,c_\varepsilon)\in L^2(\Omega_T)$ that
$\phi_\varepsilon\in L^2(0,T;H^2(\Omega))$. Furthermore, it follows from $\nabla\Delta\phi_\varepsilon=-\nabla\mu_\varepsilon+\nabla\partial_\phi f_\delta(\phi_\varepsilon,c_\varepsilon)\in L^2(\Omega_T)$ that $\phi_\varepsilon\in L^2(0,T;H^3(\Omega))$. Therefore, we can compute for $\tau\in(0,T)$,
\begin{align*}
  0 &= \int_0^\tau \langle\partial_t\phi_\varepsilon, \mu_\varepsilon+\Delta\phi_\varepsilon-\partial_\phi f_\delta(\phi_\varepsilon,c_\varepsilon)\rangle_1dt \\
  &= \int_0^\tau \langle\partial_t\phi_\varepsilon, \mu_\varepsilon\rangle_1dt
  - \frac{1}{2}\int_\Omega \big(|\nabla\phi_\varepsilon(\tau,x)|^2
  - |\nabla\phi_0(x)|^2\big)dx
  - \int_0^\tau \langle\partial_t\phi_\varepsilon,
  \partial_\phi f_\delta(\phi_\varepsilon,c_\varepsilon)\rangle_1dt.
\end{align*}
Choosing $\psi_1=\mu_\varepsilon\in L^2(0,T;H^1(\Omega))$ in \eqref{a16} and $\psi_2=\partial_c f_\delta(\phi_\varepsilon,c_\varepsilon)\in L^2(0,T;H^1(\Omega))$ in \eqref{a17}, we obtain the energy equality
\begin{align}\label{15}
  \int_\Omega&\bigg(\frac{1}{2}|\nabla\phi_\varepsilon|^2
  + f_\delta(\phi_\varepsilon,c_\varepsilon) \bigg)(\tau,x)dx
  + \int_0^\tau\int_\Omega M_\delta(\phi_\varepsilon)
  |\nabla \mu_\varepsilon - [c_\varepsilon]^\varepsilon
  \nabla\partial_c f_\delta(\phi_\varepsilon,c_\varepsilon)|^2dxdt \\
  &\phantom{xx} + \int_0^\tau\int_\Omega [c_\varepsilon]^\varepsilon
  e^{-[\phi_\varepsilon]^1_+}|\nabla\partial_c
  f_\delta(\phi_\varepsilon,c_\varepsilon)|^2dxdt
  + \delta\int_0^\tau\int_\Omega |\nabla c_\varepsilon|^2
  dxdt \nonumber \\
  &= \int_\Omega\bigg(\frac{1}{2}|\nabla\phi_0|^2
  + f_\delta(\phi_0,c_0) \bigg)dx
  - \delta\int_0^\tau\int_\Omega \nabla c_\varepsilon
  \cdot\nabla \phi_\varepsilon dxdt. \nonumber
\end{align}
It follows from this equality and similar arguments as the proof of Lemmas \ref{lemma1}--\ref{lemma3}, for $\delta\leq 1/12$, that
\begin{align}
  \big\|\sqrt{M_\delta(\phi_\varepsilon)}\big|\nabla \mu_\varepsilon - [c_\varepsilon]^\varepsilon\nabla\partial_cf_\delta
  (\phi_\varepsilon,c_\varepsilon)\big|\big\|_{L^2(\Omega_T)}
  &\leq C, \label{ad5} \\
  \big\|\sqrt{[c_\varepsilon]^\varepsilon}\nabla\partial_c f_\delta(\phi_\varepsilon,c_\varepsilon)
  \big\|_{L^2(\Omega_T)}
  &\leq C, \label{ad6} \\
  \|\nabla c_\varepsilon\|_{L^2(\Omega_T)}
  + \|c_\varepsilon\|_{L^\infty(0,T;L^2(\Omega))}
  &\leq C, \label{ad7} \\
  \|\phi_\varepsilon\|_{L^\infty(0,T;H^1(\Omega))}
  + \|\partial_t\phi_\varepsilon\|_{L^2(0,T;H^{1}(\Omega)')}
  &\leq C, \label{ad4}
\end{align}
where the constant $C>0$ is independent of $\eps$ (but possibly depending on $\delta$).

In contrast to Section \ref{estimateforN}, we cannot expect a uniform $L^\infty(\Omega)$ bound for $[c_\varepsilon]^\varepsilon$, which makes it necessary to take advantage of \eqref{ad7}. It follows from H\"older's inequality and bounds \eqref{ad6}, \eqref{ad7} that
\begin{align*}
  \big\|[c_\varepsilon]^\varepsilon & \nabla \partial_cf_\delta(\phi_\varepsilon, c_\varepsilon)\big\|_{L^2(0,T;L^{4/3}(\Omega))} \\
  \leq& \big\|\sqrt{c_\varepsilon}\big\|_{L^\infty(0,T;L^4(\Omega))}
  \big\|\sqrt{[c_\varepsilon]^\varepsilon}\nabla
  \partial_cf_\delta(\phi_\varepsilon,c_\varepsilon)
  \big\|_{L^2(\Omega_T)}\leq C.
\end{align*}
Together with \eqref{ad5} and the property $M_\delta(\phi_\varepsilon)\geq c$ for some $c>0$ independent of $\eps$, we infer that $(\na\mu_\eps)$ is bounded in $L^2(0,T;L^{4/3}(\Omega))$. The test function $\psi_3=1$ in \eqref{a18} provides a uniform bound for $|\int_\Omega\mu_\eps dx|$, so by the Poincar\'e--Wirtinger inequality,
\begin{align}\label{ad8}
  \|\mu_\varepsilon\|_{L^2(0,T;W^{1,4/3}(\Omega))}\leq C.
\end{align}

Next, the continuous embedding $H^1(\Omega)\hookrightarrow L^{2d/(d-2)}(\Omega)$ and interpolation as in \eqref{a11} yield
\begin{align}\label{ad11}
  \|c_\eps\|_{L^4(0,T;L^{2d/(d-1)}(\Omega))}
  \leq \|c_\eps\|_{L^\infty(0,T;L^2(\Omega))}^{1/2}
  \|c_\eps\|_{L^2(0,T;L^{2d/(d-2)}(\Omega))}^{1/2}\leq C.
\end{align}
Estimates \eqref{ad5}--\eqref{ad7} allow us to derive a uniform bound for the time derivative $\pa_t c_\eps$. For this, let $\psi\in L^4(0,T;W^{1,2d}(\Omega))$ and estimate as follows:
\begin{align*}
  \bigg|\int_0^T\int_\Omega \partial_tc_\varepsilon\psi dxdt\bigg|
  &\leq \|c_\varepsilon\|_{L^4(0,T;L^{2d/(d-1)}(\Omega))} \big\|\sqrt{M_\delta(\phi_\varepsilon)}
  \big\|_{L^\infty(\Omega_T)}
  \|\nabla\psi\|_{L^4(0,T;L^{2d}(\Omega))} \\
  &\phantom{xxxx}\times\big\|\sqrt{M_\delta(\phi_\varepsilon)}
  \big|\nabla \mu_\varepsilon - [c_\varepsilon]^\varepsilon
  \nabla \partial_c f_\delta(\phi_\varepsilon,c_\varepsilon)\big| \big\|_{L^2(\Omega_T)} \\
  &\phantom{xx} + \big\|\sqrt{c_\varepsilon}
  \big\|_{L^4(0,T;L^{2d/(d-1)}(\Omega))}
  \|e^{-[\phi_\varepsilon]_+^1}\|_{L^\infty(\Omega_T)}
  \|\nabla \psi\|_{L^4(0,T;L^{2d}(\Omega))} \\
  &\phantom{xxxx}\times\big\|\sqrt{[c_\varepsilon]^\varepsilon}
  \nabla\partial_c f_\delta(\phi_\varepsilon,c_\varepsilon)
  \big\|_{L^2(\Omega_T)} \\
  &\phantom{xx} + \delta\| \nabla c_\varepsilon\|_{L^2(\Omega_T)}
  \| \nabla \psi\|_{L^2(\Omega_T)}
  \leq C\|\psi\|_{L^4(0,T;W^{1,2d}(\Omega))}.
\end{align*}
This gives the desired bound for the time derivative:
\begin{align}\label{ad29}
  \|\partial_tc_\varepsilon\|_{L^{4/3}(0,T;W^{1,2d}(\Omega)')}\leq C.
\end{align}

We are now in the position to derive the entropy inequality satisfied by $(\phi_\varepsilon,c_\varepsilon)$. The entropy inequality is needed to pass to the limit $\eps\to 0$ in the energy equality \eqref{15}. We define the approximate entropy density by
\begin{align*}
  \Phi_\delta(\phi_\varepsilon) = \int_{1/2}^{\phi_\varepsilon}
  \int_{1/2}^s \frac{drds}{M_\delta(r)}\geq 0.
\end{align*}

\begin{lemma}[Entropy inequality for $(\phi_\eps,c_\eps)$]\label{lemma5}
There exists a constant $C>0$ independent of $\eps$ such that the following entropy inequality holds for any $\tau\in [0,T]$:
\begin{align}\label{entropyv}
  \int_\Omega \Phi_\delta(\phi_\varepsilon)(\tau,x)dx
  + \frac{1}{2}\int_0^\tau\int_\Omega (\Delta\phi_\varepsilon)^2dxdt
  \leq \int_\Omega \Phi_\delta(\phi_\varepsilon)(0,x)dx + C.
\end{align}
\end{lemma}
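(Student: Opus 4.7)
The strategy is to implement the formal computation of Proposition \ref{entropy} rigorously at the approximate level. Since $M_\delta(\phi)\ge M(\delta)>0$, the entropy density $\Phi_\delta$ is $C^2$ with $\Phi_\delta''=1/M_\delta$ bounded, hence $\Phi_\delta'$ is globally Lipschitz and $\Phi_\delta'(\phi_\eps)\in L^2(0,T;H^1(\Omega))$ whenever $\phi_\eps\in L^2(0,T;H^1(\Omega))$. Applying the standard chain rule for $\partial_t\phi_\eps\in L^2(0,T;H^1(\Omega)')$ and choosing $\psi_1=\Phi_\delta'(\phi_\eps)$ in the weak formulation \eqref{a16}, the identity $M_\delta(\phi_\eps)\nabla\Phi_\delta'(\phi_\eps)=\nabla\phi_\eps$ yields
\begin{align*}
\frac{d}{dt}\int_\Omega\Phi_\delta(\phi_\eps)\,dx = -\int_\Omega\nabla\phi_\eps\cdot\nabla\mu_\eps\,dx + \int_\Omega[c_\eps]^\eps\nabla\phi_\eps\cdot\nabla\partial_cf_\delta(\phi_\eps,c_\eps)\,dx.
\end{align*}

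Next I would exploit $\phi_\eps\in L^2(0,T;H^3(\Omega))$ (as observed immediately before the lemma) and the Neumann condition $\nabla\phi_\eps\cdot\nu=0$ to integrate by parts, substituting $\mu_\eps=-\Delta\phi_\eps+\partial_\phi f_\delta(\phi_\eps,c_\eps)$ to obtain
\begin{align*}
-\int_\Omega\nabla\phi_\eps\cdot\nabla\mu_\eps\,dx = -\int_\Omega(\Delta\phi_\eps)^2\,dx - \int_\Omega\partial_{\phi\phi}^2f_\delta|\nabla\phi_\eps|^2\,dx - \int_\Omega\partial_{\phi c}^2f_\delta\,\nabla c_\eps\cdot\nabla\phi_\eps\,dx.
\end{align*}
After moving $\int(\Delta\phi_\eps)^2\,dx$ to the left-hand side, three terms remain to be controlled on the right. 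The first is harmless: because $f_{1,\delta}$ is convex by construction, $\partial_{\phi\phi}^2f_\delta=f_{1,\delta}''(\phi_\eps)-2\ge -2$, so $-\int\partial_{\phi\phi}^2f_\delta|\nabla\phi_\eps|^2\,dx\le 2\|\nabla\phi_\eps\|_{L^2}^2$ is controlled uniformly by \eqref{ad4}. The second is also tame: since $\partial_{\phi c}^2f_\delta\equiv -1$, one integration by parts rewrites the cross term as $-\int c_\eps\Delta\phi_\eps\,dx$, and Young's inequality bounds it by $\tfrac14\|\Delta\phi_\eps\|_{L^2}^2+C\|c_\eps\|_{L^2}^2$, with the last factor controlled by \eqref{ad7}.

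The delicate term is the remaining one, since $[c_\eps]^\eps$ enjoys no uniform-in-$\eps$ $L^\infty$ bound; I would split it via H\"older's inequality exactly as sketched in the Introduction,
\begin{align*}
\int_\Omega[c_\eps]^\eps\nabla\phi_\eps\cdot\nabla\partial_cf_\delta\,dx \le \|\sqrt{[c_\eps]^\eps}\|_{L^4(\Omega)}\,\|\sqrt{[c_\eps]^\eps}\nabla\partial_cf_\delta\|_{L^2(\Omega)}\,\|\nabla\phi_\eps\|_{L^4(\Omega)}.
\end{align*}
The first factor equals $\|c_\eps\|_{L^2}^{1/2}\le C$ by \eqref{ad7}, and the third satisfies $\|\nabla\phi_\eps\|_{L^4}\le C(\|\Delta\phi_\eps\|_{L^2}+1)$ by elliptic regularity combined with the embedding $H^2(\Omega)\hookrightarrow W^{1,4}(\Omega)$, which is available only for $d\le 4$. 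Young's inequality then yields a bound by $\tfrac14\|\Delta\phi_\eps\|_{L^2}^2+C\|\sqrt{[c_\eps]^\eps}\nabla\partial_cf_\delta\|_{L^2}^2+C$. Absorbing the two $(\Delta\phi_\eps)^2$ contributions into the left-hand side, integrating over $[0,\tau]$, and invoking the dissipation bound \eqref{ad6} to control the remaining time integral of $\|\sqrt{[c_\eps]^\eps}\nabla\partial_cf_\delta\|_{L^2}^2$ then gives \eqref{entropyv}.

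The principal obstacle is precisely this last term: the absence of $L^\infty$ control on $c_\eps$ forces the splitting $[c_\eps]^\eps=\sqrt{[c_\eps]^\eps}\cdot\sqrt{[c_\eps]^\eps}$, pairing one factor with the degenerate dissipation from \eqref{ad6} and the other with the $L^2$-bound on $c_\eps$, and then absorbing the surviving $W^{1,4}$-norm of $\phi_\eps$ into the $(\Delta\phi_\eps)^2$ dissipation via Sobolev embedding --- which is the source of the dimensional restriction $d\le 4$ in Theorem \ref{main}. All other terms are either energy-controlled or exploit the special structure that $\partial_{\phi c}^2 f_\delta$ is constant, exactly as in the formal proof of Proposition \ref{entropy}.
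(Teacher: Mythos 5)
Your proposal is correct and follows essentially the same route as the paper: test \eqref{a16} with $\Phi_\delta'(\phi_\eps)$, use $\Phi_\delta''=1/M_\delta$ and $\mu_\eps=-\Delta\phi_\eps+\partial_\phi f_\delta$ together with $\phi_\eps\in L^2(0,T;H^3(\Omega))$, control the $\partial^2_{\phi\phi}f_\delta$ term by convexity of $f_{1,\delta}$, the cross term by one integration by parts and Young with the $L^\infty(0,T;L^2)$ bound on $c_\eps$, and the $[c_\eps]^\eps$ term by the H\"older splitting combined with \eqref{ad6}, elliptic regularity, and $H^2(\Omega)\hookrightarrow W^{1,4}(\Omega)$ for $d\le 4$. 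This is exactly the paper's argument (the paper writes the same decomposition via the identity $\nabla\partial_\phi f_\delta=\nabla\phi_\eps/\sqrt{M_\delta(\phi_\eps)}-\nabla c_\eps-2\nabla\phi_\eps$), so no substantive differences to report.
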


\begin{proof}
A similar argument as in the proof of the energy equality \eqref{15} can be used to find that $\Delta\phi_\varepsilon\in L^2(0,T;H^1(\Omega))$. Hence, we can choose the test function $\psi_3=\Delta\phi_\varepsilon$ in \eqref{a18}. It follows from the definition of $\Phi_\delta(\phi_\varepsilon)$ that $\nabla\Phi'_\delta(\phi_\varepsilon)\in L^2(\Omega_T)$.
Therefore, taking $\psi_1=\Phi'_\delta(\phi_\varepsilon)$ in \eqref{a16} and combining this equation with \eqref{a18}, choosing the test function $\psi_3=\Delta\phi_\varepsilon$, we infer, for any $\tau\in [0,T]$, that
\begin{align*}
  \int_\Omega & \Phi_\delta(\phi_\varepsilon)(\tau,x)dx
  + \int_0^\tau\int_\Omega (\Delta \phi_\varepsilon)^2 dxdt \nonumber \\
  &= \int_\Omega \Phi_\delta(\phi_\varepsilon)(0,x)dx
  - \int_0^\tau\int_\Omega \bigg(\frac{\na\phi^\eps}{\sqrt{M_\delta(\phi_\varepsilon)}}
  - \nabla c_\varepsilon-2\nabla\phi_\varepsilon \bigg)\cdot\nabla\phi_\varepsilon dxdt \nonumber \\
  &\phantom{xx} + \int_0^\tau\int_\Omega [c_\varepsilon]^\varepsilon\nabla\partial_c f_\delta(\phi_\varepsilon,c_\varepsilon)\cdot\nabla \phi_\varepsilon dxdt,
\end{align*}
observing that $\Phi''_{\delta}(\phi_\varepsilon) = 1/M_\delta(\phi_\varepsilon)$ and the fact that
\begin{align*}
  \nabla\partial_\phi f_{\delta}(\phi_\varepsilon,c_\varepsilon)
  = \frac{\nabla\phi_\varepsilon}{\sqrt{M_\delta(\phi_\varepsilon)}}
  - \nabla c_\varepsilon-2\nabla\phi_\varepsilon.
\end{align*}
We deduce from $\sqrt{M_\delta(\phi_\varepsilon)} > 0$ that
\begin{align}\label{20}
  \int_\Omega & \Phi_\delta(\phi_\varepsilon)(\tau,x)dx
  + \int_0^\tau\int_\Omega (\Delta \phi_\varepsilon)^2dxdt  \\
  &\leq \int_\Omega \Phi_\delta(\phi_\varepsilon)(0,x)dx
  + \int_0^\tau\int_\Omega \nabla c_\varepsilon\cdot\nabla \phi_\varepsilon dxdt
  + 2\int_0^\tau\int_\Omega |\nabla \phi_\varepsilon|^2dxdt \nonumber \\
  &\phantom{xx}+\int_0^\tau\int_\Omega [c_\varepsilon]^\varepsilon\nabla\partial_c f_\delta(\phi_\varepsilon,c_\varepsilon)\cdot\nabla \phi_\varepsilon dxdt. \nonumber
\end{align}
The second term on the right-hand side can be estimated according to
\begin{align}\label{21}
  \int_0^\tau\int_\Omega \nabla c_\varepsilon\cdot \nabla\phi_\varepsilon dxdt
  = -\int_0^\tau\int_\Omega c_\varepsilon\Delta\phi_\varepsilon dxdt
  \leq C + \frac{1}{4}\int_0^\tau\int_\Omega (\Delta\phi_\varepsilon)^2 dxdt.
\end{align}
Furthermore, in view of bounds \eqref{ad6}-\eqref{ad4} and the continuous embedding of $H^2(\Omega)\hookrightarrow W^{1,4}(\Omega)$ for $d\leq 4$,
the last term in \eqref{20} can be computed as
\begin{align}\label{22}
  \int_0^\tau&\int_\Omega [c_\varepsilon]^\varepsilon\nabla\partial_c
  f_\delta(\phi_\varepsilon,c_\varepsilon)
  \cdot\nabla\phi_\varepsilon dxdt \\
  &\leq \|\sqrt{c_\varepsilon}\|_{L^\infty(0,T;L^4(\Omega))}
  \|\sqrt{[c_\varepsilon]^\varepsilon}\nabla\partial_c f_\delta(\phi_\varepsilon,c_\varepsilon)\|_{L^2(\Omega_T)}
  \|\nabla\phi_\varepsilon\|_{L^2(0,T;L^4(\Omega))} \nonumber \\
  &\leq C(\|\nabla\phi_\varepsilon\|_{L^2(\Omega_T)}
  + \|\Delta \phi_\varepsilon\|_{L^2(\Omega_T)} + 1) \nonumber \\
  &\leq  C +\frac{1}{4}\int_0^T\int_\Omega (\Delta\phi_\varepsilon)^2 dxdt. \nonumber
\end{align}
Inserting \eqref{21}-\eqref{22} into \eqref{20} yields \eqref{entropyv}.
\end{proof}


\subsection{Compactness argument}

The uniform bounds in the previous subsection allow us to perform the limit $\eps\to 0$ in the weak formulation \eqref{a16}--\eqref{a18}. The proof is similar to that one given in Section \ref{limitN}, except the compactness of the terms involving $[c_\varepsilon]^\varepsilon$.
We focus on these terms in the following discussion.

In view of \eqref{ad7}, \eqref{ad8}, and \eqref{ad29}, we can apply the Aubin--Lions compactness lemma to obtain a subsequence $(c_\varepsilon,\mu_\varepsilon)$ (which is not relabeled) and functions $c$ and $\mu$ such that, as $\eps\to 0$,
\begin{align}
  c_\varepsilon\rightarrow c &\quad\mbox{strongly in } L^2(\Omega_T)\cap C([0,T];H^1(\Omega)'), \label{ad12} \\
  \partial_tc_\varepsilon\rightharpoonup \partial_tc
  &\quad\mbox{weakly in }L^{4/3}(0,T;W^{1,2d}(\Omega)'), \label{ad33} \\
  \mu_\varepsilon\rightharpoonup\mu &\quad\mbox{weakly in } L^2(0,T;W^{1,4/3}(\Omega))\cap L^2(\Omega_T). \label{ad20}
\end{align}
It follows from \eqref{ad7} that
\begin{align*}
  \big\|[c_\varepsilon]^\varepsilon-c_\varepsilon
  \big\|_{L^1(\Omega_T)}
  &= \int_0^T\int_\Omega (c_\varepsilon-1/\varepsilon)\chi_{\{c_\varepsilon\geq 1/\varepsilon\}}dxdt
  \leq 2\int_0^T\int_\Omega c_\varepsilon
  \chi_{\{c_\varepsilon\geq 1/\varepsilon \}}dxdt \\
  &\leq 2\varepsilon\int_0^T\int_\Omega c_\varepsilon^2dxdt
  \leq C\varepsilon\rightarrow 0.
\end{align*}
This, together with \eqref{ad12}, implies that
$[c_\varepsilon]^\varepsilon\to c$ a.e.\ in $\Omega_T$. Then
bound \eqref{ad11} shows that
\begin{align}\label{ad31}
  [c_\varepsilon]^\varepsilon\rightarrow c\quad\mbox{strongly in } L^p(0,T;L^q(\Omega))\mbox{ for } p\in[1,4),\ q\in \bigg[1,\frac{2d}{d-1}\bigg).
\end{align}
Since $M_\delta(\phi_\varepsilon)$ converges strongly to $M_\delta(\phi)$ in $L^p(\Omega_T)$ for $1\leq p<\infty$
and $\na\pa_c f(\phi_\eps,c_\eps)=\nabla\phi_\varepsilon-\nabla c_\varepsilon$ converges weakly to $\na\pa_c f(\phi,c)=\nabla\phi-\nabla c$ in $L^2(\Omega_T)$,
it follows from estimate \eqref{ad5} and the convergence results \eqref{ad20} and \eqref{ad31} that
\begin{align}\label{ad14}
  M_\delta(\phi_\varepsilon)&
  \big(\nabla \mu_\varepsilon-[c_\varepsilon]^\varepsilon
  \na\pa_c f(\phi_\eps,c_\eps) \big) \\
  &\rightharpoonup M_\delta(\phi)\big(\nabla \mu-c\na\pa_c f(\phi,c) \big)\quad\mbox{weakly in }L^2(\Omega_T). \nonumber
\end{align}

Taking into account the convergences \eqref{ad12}--\eqref{ad14}, we can pass to the limit $\varepsilon\rightarrow 0$ in system \eqref{a16}--\eqref{a18}, for the variables $(\phi_\varepsilon,c_\varepsilon,\mu_\varepsilon)$, to conclude that the
triplet $(\phi,c,\mu)$ is a weak solution to the following problem:
\begin{align}
  \partial_t\phi &= \diver\big(M_\delta(\phi)(\nabla\mu
  - c\nabla\partial_cf_\delta(\phi,c)) \big), \label{ad16} \\
  \partial_t c &= -{\rm{div}}\big(c M_\delta(\phi) (\nabla\mu-c\nabla\partial_cf_\delta(\phi,c)) \big)
  + \diver\big(c e^{-[\phi]^1_+}\nabla\partial_cf_\delta(\phi,c) \big)
  + \delta\Delta c, \label{ad17} \\
  \mu &= -\Delta\phi + \partial_\phi f_\delta(\phi,c), \label{ad18}
\end{align}
with the boundary and initial conditions
$$
  \nabla\phi\cdot\nu = \nabla c\cdot\nu = \nabla\mu\cdot\nu = 0
  \quad\mbox{on }(0,T)\times\partial\Omega, \quad
  (\phi(0),c(0)) = (\phi_0,c_0)\quad\mbox{in }\Omega.
$$
The weak formulations of \eqref{ad16} and \eqref{ad18} are the same as \eqref{a16} and \eqref{a18}, respectively, while the weak formulation of \eqref{ad17} can be written, for any $\psi\in L^4(0,T;W^{1,2d}(\Omega))$, as
\begin{align}\label{ad19}
  \int_0^T\langle \partial_t c, \psi\rangle_3dt
  &= \int_0^T\int_\Omega c M_\delta(\phi)(\nabla \mu-c\nabla
  \partial_c f_\delta(\phi,c))\cdot \nabla \psi dxdt \\
  &\phantom{xx} - \int_0^T\int_\Omega c e^{-[\phi]_+^1}\nabla
  \partial_c f_\delta(\phi,c)\cdot\nabla \psi dxdt
 - \delta\int_0^T\int_\Omega \nabla c\cdot \nabla \psi dxdt,\nonumber
\end{align}
where $\langle\cdot,\cdot\rangle_3$ denotes the dual product between $W^{1,2d}(\Omega)'$ and $W^{1,2d}(\Omega)$.

As the weak formulation \eqref{ad19} holds for $\psi\in  L^4(0,T;W^{1,2d}(\Omega))$, we cannot choose $\psi=\pa_c f_\delta(\phi,c)$ as a test function in \eqref{ad19} as in the proof of the energy equality \eqref{15}. Fortunately, we can derive this identity, satisfied by $(\phi,c)$, in another way. Thanks to \eqref{ad12}--\eqref{ad14} and the weak lower semicontinuity of convex functions, we are able to pass to the limit $\varepsilon\rightarrow 0$ in \eqref{15}, expect for the last term. The entropy inequality \eqref{entropyv} gives a uniform $L^2(\Omega)$ bound for $\Delta\phi_\varepsilon$, which implies that $\Delta\phi_\varepsilon$ weakly converges to $\Delta\phi$ in $L^2(\Omega)$. We deduce from the strong convergence \eqref{ad12} of $c_\eps$ that
\begin{align*}
  -\int_0^\tau\int_\Omega\na c_\eps\cdot\na\phi_\eps dxdt
  &= \int_0^\tau\int_\Omega c_\varepsilon\Delta\phi_\varepsilon dxdt \\
  &\rightarrow \int_0^\tau\int_\Omega c\Delta\phi dxdt
  = -\int_0^\tau\int_\Omega\na c\cdot\na\phi dxdt.
\end{align*}
As a consequence, $(\phi,c)$ satisfies, for any $\tau\in [0,T]$, the energy equality
\begin{align}\label{ad15}
  \int_\Omega & \bigg(\frac{1}{2}|\nabla\phi|^2
  + f_\delta(\phi,c) \bigg)(\tau,x) dx
  + \int_0^\tau\int_\Omega M_\delta(\phi)|\nabla \mu
  - c\nabla\partial_c f_\delta(\phi,c)|^2 dxdt \\
  &\phantom{xx} + \int_0^\tau\int_\Omega ce^{-[\phi]^1_+}
  |\nabla\partial_c f_\delta(\phi,c)|^2 dxdt
  + \delta\int_0^\tau\int_\Omega |\nabla c|^2 dxdt \nonumber \\
  &= \int_\Omega\bigg(\frac{1}{2}|\nabla\phi_0|^2
  + f_\delta(\phi_0,c_0) \bigg)dx
 - \delta\int_0^\tau\int_\Omega \nabla c\cdot\nabla \phi dxdt. \nonumber
\end{align}


\section{The limit $\delta\rightarrow 0$}\label{l}

For the proof of Theorem \ref{main}, it remains to pass to the limit $\delta\to 0$ in the weak formulation of system \eqref{ad16}--\eqref{ad18}. Let $(\phi_\delta,c_\delta,\mu_\delta)$ be the solution constructed in the previous section.

\subsection{Uniform estimates independent of $\delta$}

By the same argument as that one used in the proof of Lemma \ref{lemma1}, we derive from the energy equality \eqref{ad15} the following estimates:
\begin{align}
  \|\phi_\delta\|_{L^\infty(0,T;H^1(\Omega))}
  + \big\|\sqrt{M_\delta(\phi_\delta)}\big|\nabla \mu_\delta
  - c_\delta\nabla\partial_cf_\delta(\phi_\delta,c_\delta)\big|
  \big\|_{L^2(\Omega_T)}
  &\leq C, \label{16} \\
  \sqrt{\delta}\|\nabla c_\delta\|_{L^2(\Omega_T)}
  + \|c_\delta\|_{L^\infty(0,T;L^2(\Omega))}
  &\leq C, \label{17} \\
  \|\sqrt{c_\delta}\nabla\partial_cf_\delta
  (\phi_\delta,c_\delta)\|_{L^2(\Omega_T)}
  &\leq C, \label{18}
\end{align}
where $C$ is a positive constant independent of $\delta$.

The entropy inequality from Lemma \ref{lemma5} and the above compactness arguments show that $\Delta\phi_\delta\in L^2(\Omega_T)$. Thus, since $\mu_\delta$, $\partial_\phi f_\delta(\phi_\delta,c_\delta)\in L^2(\Omega_T)$,
the weak formulation of \eqref{ad18} holds for any $\psi\in L^2(\Omega_T)$. We can choose $\Delta\phi_\delta$ as a test function in this weak formulation and proceed as in Lemma \ref{lemma5} to obtain, for any $\tau\in [0,T]$, the entropy inequality
\begin{align}\label{19}
  \int_\Omega \Phi_\delta(\phi_\delta)(\tau,x)dx
  + \frac{1}{2}\int_0^\tau\int_\Omega (\Delta\phi_\delta)^2dxdt
  \leq \int_\Omega \Phi_\delta(\phi_\delta)(0,x)dx + C.
\end{align}

\begin{lemma}[Estimates for $\phi_\delta$ and $c_\delta$]
There exists a constant $C>0$ independent of $\delta$ such that
\begin{align}
  \|\phi_\delta\|_{L^2(0,T;H^2(\Omega))}&\leq C, \label{a20} \\
  \big\|c_\delta^{3/2}\big\|_{L^2(0,T;H^1(\Omega))}
  + \|c_\delta\|_{L^4(0,T;L^{8d/(3d-4)}(\Omega))}
  &\leq C. \label{24}
\end{align}
\end{lemma}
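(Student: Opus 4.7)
The plan is to chain the energy bounds \eqref{16}--\eqref{18} with the entropy inequality \eqref{19}, exploiting crucially the specific structure $\pa_c f_\delta(\phi,c) = c + (1-\phi)$, which gives $\na\pa_c f_\delta(\phi_\delta,c_\delta) = \na c_\delta - \na\phi_\delta$.

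First, I would establish \eqref{a20}. The right-hand side of \eqref{19} is bounded uniformly in $\delta$ because the initial condition satisfies $\phi_*\le\phi_0\le 1-\phi_*$, forcing $\int_\Omega\Phi_\delta(\phi_0)dx$ to be controlled independently of $\delta$ (for $\delta\le\phi_*$, $M_\delta(\phi_0)\ge M(\phi_*)>0$). This yields $\|\Delta\phi_\delta\|_{L^2(\Omega_T)}\le C$, and combined with the $L^\infty(0,T;H^1(\Omega))$-bound from \eqref{16}, the elliptic regularity estimate cited in the proof of Proposition \ref{entropy} gives \eqref{a20}.

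Next, I would derive the $H^1$-bound on $c_\delta^{3/2}$. Following the identity sketched in the introduction,
\begin{align*}
  \int_0^T\!\!\int_\Omega c_\delta|\na c_\delta|^2 dxdt
  \le 2\int_0^T\!\!\int_\Omega c_\delta|\na c_\delta - \na\phi_\delta|^2 dxdt
  + 2\int_0^T\!\!\int_\Omega c_\delta|\na\phi_\delta|^2 dxdt.
\end{align*}
The first integral is bounded by \eqref{18}. For the second, using \eqref{17}, the embedding $H^2(\Omega)\hookrightarrow W^{1,4}(\Omega)$ valid for $d\le 4$, and \eqref{a20},
\begin{align*}
  \int_0^T\!\!\int_\Omega c_\delta|\na\phi_\delta|^2 dxdt
  \le \|c_\delta\|_{L^\infty(0,T;L^2(\Omega))}\|\na\phi_\delta\|_{L^2(0,T;L^4(\Omega))}^2
  \le C.
\end{align*}
Since $|\na c_\delta^{3/2}|^2 = \tfrac{9}{4}c_\delta|\na c_\delta|^2$, this yields $\|\na c_\delta^{3/2}\|_{L^2(\Omega_T)}\le C$, and together with $\|c_\delta^{3/2}\|_{L^\infty(0,T;L^{4/3}(\Omega))}\le\|c_\delta\|_{L^\infty(0,T;L^2(\Omega))}^{3/2}\le C$, this delivers the first half of \eqref{24}.

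Finally, for the Lebesgue bound, Sobolev embedding applied to $c_\delta^{3/2}\in L^2(0,T;H^1(\Omega))$ gives $c_\delta\in L^3(0,T;L^{3d/(d-2)}(\Omega))$. Interpolating pointwise in time between $\|c_\delta(t)\|_{L^2(\Omega)}$ and $\|c_\delta(t)\|_{L^{3d/(d-2)}(\Omega)}$ with H\"older weights $\alpha=1/4$ and $1-\alpha=3/4$, then integrating in time (using the $L^3(0,T)$-control of the latter norm), gives the exponents $p=4$ and $1/q=\alpha/2+(1-\alpha)(d-2)/(3d)=(3d-4)/(8d)$, which yields the $L^4(0,T;L^{8d/(3d-4)}(\Omega))$-bound. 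The main obstacle is Step 2: the $H^2$-control on $\phi_\delta$ produced in Step 1 is exactly what is needed to absorb the cross-term $\int c_\delta|\na\phi_\delta|^2$ via $H^2\hookrightarrow W^{1,4}$, and this is where the dimension restriction $d\le 4$ enters the argument.
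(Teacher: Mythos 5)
Your proposal is correct and follows essentially the same route as the paper: the entropy inequality yields the $L^2$ bound on $\Delta\phi_\delta$ and hence \eqref{a20}, the cross term $\sqrt{c_\delta}\nabla\phi_\delta$ is controlled via $H^2(\Omega)\hookrightarrow W^{1,4}(\Omega)$ together with the $L^\infty(0,T;L^2(\Omega))$ bound on $c_\delta$, and the Lebesgue bound follows from the same Sobolev-plus-interpolation argument with exponents $3/4$ and $1/4$. The only (harmless) differences are cosmetic: you square the triangle inequality instead of applying it in $L^2$, and you spell out the uniform-in-$\delta$ bound on $\int_\Omega\Phi_\delta(\phi_0)\,dx$, which the paper leaves implicit.
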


\begin{proof}
The entropy inequality \eqref{19} provides a uniform bound for $\Delta\phi_\eps$ in $L^2(\Omega_T)$. This, together with \eqref{16}, implies \eqref{a20}. As a consequence, $\na\phi_\eps$ is uniformly bounded in $L^2(0,T;$ $H^1(\Omega))\hookrightarrow L^2(0,T;L^4(\Omega))$ (for $d\le 4$), which gives
\begin{align*}
  \|\sqrt{c_\delta}\nabla\phi_\delta\|_{L^2(\Omega_T)}
  \leq \|\sqrt{c_\delta}\|_{L^\infty(0,T;L^4(\Omega))}
  \|\nabla\phi_\delta\|_{L^2(0,T;L^4(\Omega))}\leq C.
\end{align*}
We conclude from \eqref{18} that
\begin{align*}
  \big\|\na c_\delta^{3/2}\big\|_{L^2(\Omega_T)}
  &= \frac32\|\sqrt{c_\delta}\na c_\delta
  \|_{L^2(\Omega_T)} \\
  &\le C\|\sqrt{c_\delta}\na\pa_c f(\phi_\delta,c_\delta)
  \|_{L^2(\Omega_T)} + C\|\sqrt{c_\delta}\na\phi_\delta
  \|_{L^2(\Omega_T)} \le C.
\end{align*}
By the Sobolev embedding $H^1(\Omega)\hookrightarrow L^{2d/(d-2)}(\Omega)$, $(c_\delta^{3/2})$ is bounded in $L^2(0,T;L^{2d/(d-2)}(\Omega))$ and thus, by interpolation,
\begin{align*}
  \|c_\delta\|_{L^4(0,T;L^{8d/(3d-4)}(\Omega))}
  \leq \|c_\delta\|_{L^3(0,T;L^{3d/(d-2)}(\Omega))}^{3/4}
  \|c_\delta\|_{L^\infty(0,T;L^2(\Omega))}^{1/4}\leq C,
\end{align*}
achieving the second claim in \eqref{24}.
\end{proof}

We also need estimates for the time derivatives.

\begin{lemma}[Estimates for the time derivatives]
There exists a constant $C>0$ independent of $\delta$ such that
\begin{align}
  \|\partial_t\phi_\delta\|_{L^2(0,T;H^{1}(\Omega)')}
  + \|\partial_t c_\delta\|_{L^{4/3}(0,T;W^{1,8d/(d+4)}(\Omega)')}
  \leq C. \label{a23}
\end{align}
\end{lemma}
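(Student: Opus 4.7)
The plan is to proceed by a duality argument, testing the weak formulations \eqref{ad16} and \eqref{ad19} against arbitrary elements of the respective trial spaces and invoking the a priori estimates \eqref{16}--\eqref{18} and \eqref{24} already available.

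The bound on $\pa_t\phi_\delta$ is the easier one. Testing the weak formulation of \eqref{ad16} against an arbitrary $\psi_1\in L^2(0,T;H^1(\Omega))$ and applying Cauchy--Schwarz with the three factors $\sqrt{M_\delta(\phi_\delta)}\in L^\infty(\Omega_T)$, $\sqrt{M_\delta(\phi_\delta)}|\na\mu_\delta-c_\delta\na\pa_c f_\delta(\phi_\delta,c_\delta)|\in L^2(\Omega_T)$ (by \eqref{16}), and $\na\psi_1\in L^2(\Omega_T)$, one immediately obtains
\[
  \Big|\int_0^T\langle\pa_t\phi_\delta,\psi_1\rangle_1 dt\Big|\le C\|\psi_1\|_{L^2(0,T;H^1(\Omega))},
\]
which yields the first summand of \eqref{a23}.

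The bound on $\pa_t c_\delta$ is the substantive part. For a test function $\psi\in L^4(0,T;W^{1,8d/(d+4)}(\Omega))$, I would split the right-hand side of \eqref{ad19} into its three contributions. (i) The convective term is controlled by a three-factor Hölder inequality using $c_\delta\in L^4(0,T;L^{8d/(3d-4)}(\Omega))$ from \eqref{24}, the $L^\infty$ bound on $\sqrt{M_\delta(\phi_\delta)}$, and the $L^2(\Omega_T)$ bound on $\sqrt{M_\delta(\phi_\delta)}|\na\mu_\delta-c_\delta\na\pa_cf_\delta|$ from \eqref{16}, paired with $\na\psi\in L^4(0,T;L^{8d/(d+4)}(\Omega))$; the exponents close because $1/4+1/2+1/4=1$ in time and $(3d-4)/(8d)+1/2+(d+4)/(8d)=1$ in space. (ii) For the degenerate-diffusion term, I would decompose $c_\delta\na\pa_c f_\delta=\sqrt{c_\delta}\cdot\sqrt{c_\delta}\na\pa_cf_\delta$ and use $\sqrt{c_\delta}\in L^\infty(0,T;L^4(\Omega))\cap L^8(0,T;L^{16d/(3d-4)}(\Omega))$ (from \eqref{17} and \eqref{24}) together with $\sqrt{c_\delta}\na\pa_cf_\delta\in L^2(\Omega_T)$ (from \eqref{18}) and $e^{-[\phi_\delta]_+^1}\le 1$, which places this flux in a space that embeds continuously (on the bounded cylinder $\Omega_T$) into the dual $L^{4/3}(0,T;L^{8d/(7d-4)}(\Omega))$ of $\na\psi$'s space. (iii) The artificial diffusion is trivially handled by the estimate $\delta\|\na c_\delta\|_{L^2(\Omega_T)}\le C\sqrt\delta$ from \eqref{17}.

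The main obstacle is that the $L^\infty$ truncation of $c$ available at the previous approximation level (and exploited in Lemma~\ref{lemma3}) has been removed. The estimate must therefore be built entirely from the finer integrability \eqref{24} of $c_\delta$ and the degenerate-diffusion bound \eqref{18}; indeed, the unusual exponent $8d/(d+4)$ appearing in the trial space of \eqref{a23} is precisely the one for which the three-factor Hölder inequality closes in both the time and the spatial variable, and the restriction $d\le 4$ (which ensures $3d-4>0$ and the validity of the relevant Sobolev embeddings used to derive \eqref{24}) is what makes the whole chain work.
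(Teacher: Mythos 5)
Your proposal is correct and takes essentially the same route as the paper: a duality argument testing the weak formulations and closing the H\"older exponents with the uniform bounds \eqref{16}--\eqref{18} and \eqref{24}. The paper handles the degenerate-diffusion term by the same splitting $c_\delta\nabla\partial_c f_\delta=\sqrt{c_\delta}\,(\sqrt{c_\delta}\nabla\partial_c f_\delta)$, written as a four-factor H\"older estimate with $\|\sqrt{c_\delta}\|_{L^4(0,T;L^{8d/(3d-4)}(\Omega))}$, which is exactly what your dual-space formulation encodes.
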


\begin{proof}
We only present the proof for $\pa_t c_\delta$, as the proof for $\pa_t\phi_\delta$ is similar. Thanks to \eqref{16}, \eqref{18}, and \eqref{24}, we have for any $\psi\in L^4(0,T;W^{1,8d/(d+4)}(\Omega))$,
\begin{align*}
  \Big|\int_0^T&\int_\Omega\partial_t c_\delta \psi\ dxdt \bigg|
  \leq \|c_\delta\|_{L^4(0,T;L^{8d/(3d-4)}(\Omega))}
  \big\|\sqrt{M_\delta(\phi_\delta)}\big\|_{L^\infty(\Omega_T)}
  \\
  &\phantom{xx}\times\|\nabla\psi\|_{L^4(0,T;L^{8d/(d+4)}(\Omega))}
  \big\|\sqrt{M_\delta(\phi_\delta)}
  \big|\nabla\mu_\delta-c_\delta\nabla \partial_cf_\delta
  (\phi_\delta,c_\delta)\big|\big\|_{L^2(\Omega_T)} \\
  &+ \big\|e^{-[\phi_\delta]_+^1}
  \big\|_{L^\infty(\Omega_T)}
  \|\sqrt{c_\delta}\|_{L^4(0,T;L^{8d/(3d-4)}(\Omega))} \\
  &\phantom{xx}\times\|\sqrt{c_\delta}\nabla\partial_c
  f_\delta(\phi_\delta,c_\delta)\|_{L^2(\Omega_T)}
  \|\nabla\psi\|_{L^4(0,T;L^{8d/(d+4)}(\Omega))} \\
  &+ \delta\|\nabla c_\delta\|_{L^2(\Omega_T)}
  \|\nabla\psi\|_{L^2(\Omega_T)} \leq C.
\end{align*}
This completes the proof of \eqref{a23}.
\end{proof}


\subsection{Compactness argument}

The estimates obtained in the previous subsection allow us to complete the proof of Theorem \ref{main}.

\subsubsection*{Step 1: Convergence of $(\phi_\delta)$}
We deduce from bound \eqref{a23} that there exists a subsequence (not relabeled) such that, as $\delta\to 0$,
\begin{align*}
  \partial_t\phi_\delta\rightharpoonup \partial_t\phi
  &\quad\mbox{weakly in } L^2(0,T;H^{1}(\Omega)'). 
\end{align*}
Taking into account estimate \eqref{a20}, we have, up to a subsequence,
\begin{align}\label{a26}
  \Delta\phi_\delta \rightharpoonup \Delta\phi
  \quad\mbox{weakly in } L^2(\Omega_T).
\end{align}
Estimates \eqref{16}, \eqref{a20}, \eqref{a23}, and the Aubin--Lions lemma show that
\begin{align}\label{27}
  \phi_\delta\rightarrow \phi \quad\mbox{strongly in } L^2(0,T;H^1(\Omega))\cap C([0,T];L^2(\Omega)),
\end{align}
which implies that $\phi_\delta\rightarrow \phi$ a.e.\ in $\Omega_T$. Since $\exp(-[\phi_\delta]_+^1)$ is bounded in $L^\infty(\Omega)$, we have the convergence
\begin{align*}
  e^{-[\phi_\delta]_+^1}\rightarrow e^{-[\phi]_+^1}
  \quad\mbox{strongly in } L^p(\Omega_T),\quad p\in[1,\infty).
\end{align*}
We need to verify that $\phi$ lies between zero and one. Although the proof of the following lemma is very similar to \cite{ElGa96}, we present the full proof for the convenience of the reader.

\begin{lemma}[Upper and lower bounds for $\phi$]\label{lemma4}
The limit function $\phi$ satisfies $0<\phi<1$ a.e.\ in $\Omega_T$.
\end{lemma}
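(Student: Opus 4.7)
The plan is to exploit the uniform entropy bound from \eqref{19}, which yields
$\sup_{\tau\in[0,T]}\int_\Omega\Phi_\delta(\phi_\delta)(\tau,x)dx\le C$
independently of $\delta$ (this is uniform because $\phi_*\le\phi_0\le 1-\phi_*$ forces $\Phi_\delta(\phi_0)=\Phi(\phi_0)$ as soon as $\delta<\phi_*$, which is a finite constant depending only on $\phi_*$), combined with the a.e.\ convergence $\phi_\delta\to\phi$ from \eqref{27}.

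\smallskip\noindent\textbf{Step 1 (weak inclusion $0\le\phi\le 1$).} I would first make explicit the behavior of $\Phi_\delta$ in the extrapolation regions. Since $\Phi_\delta$ is nondecreasing on $[1/2,\infty)$ and $\Phi_\delta''=1/(\delta^2(1-\delta)^2)$ on $[1-\delta,\infty)$, a direct integration gives
$$\Phi_\delta(\phi)\ge\frac{(\phi-(1-\delta))^2}{2\delta^2(1-\delta)^2}\quad\text{for }\phi\ge 1-\delta,$$
and an analogous estimate for $\phi\le\delta$. For any $\alpha>0$ and $\delta<\alpha$, this and the entropy bound give the Chebyshev-type estimate
$$\mbox{meas}\{(t,x)\in\Omega_T:\phi_\delta(t,x)\ge 1+\alpha\}\le\frac{2CT\delta^2(1-\delta)^2}{\alpha^2}\to 0.$$
Using the a.e.\ convergence $\phi_\delta\to\phi$, a Fatou-type argument on characteristic functions (if $\phi(t,x)>1+\alpha$, then $\phi_\delta(t,x)>1+\alpha/2$ eventually) shows $\mbox{meas}\{\phi\ge 1+\alpha\}=0$ for every $\alpha>0$; hence $\phi\le 1$ a.e. The symmetric argument delivers $\phi\ge 0$ a.e.

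\smallskip\noindent\textbf{Step 2 (strict inclusion $0<\phi<1$).} To upgrade to the strict inequalities, I would apply Fatou's lemma to the sequence $\Phi_\delta(\phi_\delta)$. On $\{0<\phi<1\}$, for $\delta$ small enough the pointwise convergence forces $\phi_\delta$ into $[\delta,1-\delta]$, where $\Phi_\delta=\Phi$; continuity of $\Phi$ then yields $\Phi_\delta(\phi_\delta)\to\Phi(\phi)<\infty$. On $\{\phi\in\{0,1\}\}$, a case split shows $\liminf_{\delta\to 0}\Phi_\delta(\phi_\delta(t,x))=+\infty$: either $\phi_\delta(t,x)\in[\delta,1-\delta]$ along a subsequence, in which case $\Phi_\delta(\phi_\delta)=\Phi(\phi_\delta)\to\Phi(1)=+\infty$ (since $\Phi''(r)=1/(r(1-r))^2$ implies $\Phi(r)\to\infty$ as $r\to 1^-$), or $\phi_\delta(t,x)>1-\delta$ eventually, and monotonicity of $\Phi_\delta$ past $1/2$ gives $\Phi_\delta(\phi_\delta)\ge\Phi_\delta(1-\delta)=\Phi(1-\delta)\to\infty$; the argument at $0$ is symmetric. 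Fatou's lemma then produces $\int_0^T\int_\Omega\Phi(\phi)dxdt<\infty$, which is only compatible with $\phi(t,x)\in(0,1)$ for a.e.\ $(t,x)\in\Omega_T$.

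The main technical delicacy is precisely the divergence $\Phi_\delta(\phi_\delta(t,x))\to+\infty$ at points where $\phi(t,x)\in\{0,1\}$, since a priori $\phi_\delta$ could slip into the extrapolated quadratic branch in a way that keeps $\Phi_\delta(\phi_\delta)$ bounded; the combination of monotonicity of $\Phi_\delta$ on the extrapolation region with the divergence $\Phi(1-\delta),\Phi(\delta)\to\infty$ as $\delta\to 0$ is what closes this loophole and forces the strict inclusion.
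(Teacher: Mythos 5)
Your proof is correct and follows essentially the same route as the paper's: a Chebyshev-type estimate on the sets $\{\phi_\delta\ge 1+\alpha\}$ (and symmetrically $\{\phi_\delta\le-\alpha\}$) via the quadratically extrapolated entropy and the uniform entropy bound gives $0\le\phi\le 1$, and Fatou's lemma together with $\Phi(0)=\Phi(1)=\infty$ upgrades this to the strict bounds. Your case analysis in Step 2 justifying $\liminf_{\delta\to 0}\Phi_\delta(\phi_\delta)=\infty$ where $\phi\in\{0,1\}$, and your lower bound $\Phi_\delta(\phi)\ge(\phi-(1-\delta))^2/(2\delta^2(1-\delta)^2)$, are in fact slightly more careful renderings of the same steps in the paper.
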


\begin{proof}
For any $\alpha>0$, define the sets
$V_{\alpha,\delta}:=\{(t,x)\in \Omega_T:\phi_\delta(t,x)\geq 1+\alpha \}$. We integrate
$$
  \Phi''_\delta(\phi_\delta(t,x))
  = \frac{1}{M(1-\delta)} = \frac{1}{\delta^2(1-\delta)^2}
  \quad\mbox{for }(t,x)\in V_{\alpha,\delta}
$$
twice to obtain
\begin{align*}
  \Phi_\delta(\phi_\delta(t,x))
  = \int_{1/2}^{\phi_\delta}\int_{1/2}^s
  \frac{drds}{\delta^2(1-\delta)^2}
  = \frac{(\phi_\delta-1/2)^2}{2\delta^2(1-\delta)^2}.
\end{align*}
We infer from the definition of $V_{\alpha,\delta}$ and from the entropy inequality \eqref{19} that
\begin{align*}
  \frac{\alpha^2 {\rm{meas}}(V_{\alpha,\delta})}{2\delta^2(1-\delta)^2}
  \leq \int_{V_{\alpha,\delta}}
  \frac{(\phi_\delta-1/2)^2}{2\delta^2(1-\delta)^2}dxdt
  = \int_{V_{\alpha,\delta}}\Phi_{\delta}(\phi_\delta)(t,x)dxdt
  \leq C.
\end{align*}
In view of the a.e.\ convergence of $(\phi_\delta)$, this yields
\begin{align*}
  {\rm meas}\{(t,x):\phi(t,x)\geq 1+\alpha \}
  = \lim_{\delta\rightarrow 0} {\rm meas}(V_{\alpha,\delta})
  \leq \lim_{\delta\rightarrow 0}
  \frac{2C\delta^2(1-\delta)^2}{\alpha^2} = 0,
\end{align*}
implying that $\phi(t,x)\leq 1+\alpha$ a.e.\ in $\Omega_T$ for all $\alpha>0$. Since $\alpha>0$ is arbitrary,
$\phi(t,x)\le 1$ a.e.\ in $\Omega_T$.
In a similar way, we show that $\phi(t,x)\ge 0$ a.e.\ in $\Omega_T$.

Finally, because of $\Phi_\delta(\phi_\delta)\ge 0$, we can apply Fatou's lemma to conclude that
\begin{align*}
  \int_\Omega \lim_{\delta\rightarrow 0}
  \Phi_\delta(\phi_\delta(t,x))dx
  \leq \lim_{\delta\rightarrow 0}\int_\Omega \Phi_\delta(\phi_\delta(t,x))dx \leq C.
\end{align*}
It follows from $\Phi(0)=\Phi(1)=\infty$ that
\begin{align*}
  \lim_{\delta\rightarrow 0}\Phi_\delta(\phi_\delta)
  = \begin{cases}
  \Phi(\phi) & \mbox{if }0<\phi<1, \\
  \infty &\mbox{if } \phi=0\mbox{ or }\phi=1.
  \end{cases}
\end{align*}
Consequently, ${\rm meas} \{x:\phi(t,x)=0$ or $\phi(t,x)=1\}=0$ for a.e.\ $t\in(0,T)$, concluding the proof.
\end{proof}

\subsubsection*{Step 2: Convergence of $(c_\delta)$}
Bound \eqref{a23} for the time derivative of $c_\delta$ gives the existence of a subsequence (not relabeled) such that, as $\delta\to 0$,
$$
  \partial_t c_\delta\rightharpoonup \partial_t c
  \quad\mbox{weakly in }L^{4/3}(0,T;W^{1,8d/(d+4)}(\Omega)').
$$
To conclude the strong convergence of $(c_\delta)$, we use the ``nonlinear'' Aubin--Lions compactness lemma \cite[Theorem 3]{CJL14}, which provides, in view of the gradient bound in \eqref{24}, a subsequence such that
\begin{align}
  c_\delta\rightarrow c\quad\mbox{strongly in }L^3(\Omega_T). \label{28}
\end{align}
This implies that, again up to a subsequence,
$c_\delta^{3/2}\rightarrow c^{3/2}$ a.e.\ in $\Omega_T$ and, because of \eqref{24},
\begin{align*}
  c_\delta^{3/2}\rightarrow c^{3/2}\quad\mbox{strongly in } L^p(0,T;L^q(\Omega)),\ \mbox{where } p\in [1,2),\ q\in \Big[1,\frac{2d}{d-2}\Big).
\end{align*}
This convergence is sufficient to pass to the limit $\delta\to 0$ in the part of $c_\delta\na\pa_c f(\phi_\delta,c_\delta)$ that contains $c_\delta^{3/2}$. Indeed, we have for any test function $\psi\in C_0^\infty(\Omega_T;\R^d)$, by integrating by parts,
\begin{align*}
  \int_0^T\int_\Omega \sqrt{c_\delta}\nabla c_\delta\cdot \psi dxdt
  &= -\frac{2}{3}\int_0^T\int_\Omega c_\delta^{3/2}\diver\psi dxdt \\
  &\rightarrow -\frac{2}{3}\int_0^T\int_\Omega c^{3/2}\diver\psi dxdt
 = \int_0^T\int_\Omega \sqrt{c}\nabla c\cdot \psi dxdt.
\end{align*}
Actually, since $\na c_\delta^{3/2}=(3/2)\sqrt{c_\delta}\na c_\delta$ is uniformly bounded in $L^2(\Omega_T)$, this convergence holds true even in $L^2(\Omega_T)$:
\begin{align}
  \sqrt{c_\delta}\nabla c_\delta
  \rightharpoonup \sqrt{c}\nabla c \quad\mbox{weakly in }
  L^2(\Omega_T). \label{29}
\end{align}

\subsubsection*{Step 3: Convergence of $M_\delta(\phi_\delta)$}
We deduce from the mean value theorem that, for any $z\in[0,1]$,
\begin{align*}
  |M_\delta(z)-M(z)|&\leq \sup_{0<z<\delta}|M(\delta)-M(z)|
  + \sup_{1-\delta<z<1}|M(1-\delta)-M(z)| \\
  &\leq \sup_{0<z<\delta}M'(\delta_1(z))\delta
  + \sup_{1-\delta<z<1}M'(\delta_2(z))\delta
  \rightarrow 0,
\end{align*}
where $\delta_1(z)\in (z,\delta)$ and $\delta_2(z)\in (1-\delta, z)$. Hence, $M_\delta\rightarrow M$ uniformly in $[0,1]$.
It follows from the continuity of $M$ that
\begin{align}\label{32}
  M_\delta(\phi_\delta)\rightarrow M(\phi)\quad\mbox{a.e.\ in }\Omega_T.
\end{align}
As $M_\delta(\phi_\delta)$ is uniformly bounded in $L^\infty(\Omega_T)$, this implies that
\begin{align}
  M_\delta(\phi_\delta)\rightarrow M(\phi)\quad\mbox{strongly in } L^p(\Omega_T), \quad p\in [1,\infty). \label{31}
\end{align}
Moreover, in view of the bounds $0<\phi<1$ from Lemma \ref{lemma4}, we have
\begin{align*}
  M'_\delta(\phi_\delta)\rightarrow M'(\phi)
  \quad\mbox{a.e.\ in }\Omega_T.
\end{align*}
This yields, together with \eqref{32}, that
\begin{align*}
  \frac{M'_\delta(\phi_\delta)}{\sqrt{M_\delta(\phi_\delta)}}
  \rightarrow \frac{M'(\phi)}{\sqrt{M(\phi)}}\quad\mbox{a.e.\ in } \Omega_T.
\end{align*}
In view of the definition of $M_\delta(\phi_\delta)$,
the singularity of $1/\sqrt{M_\delta(\phi_\delta)}$ is canceled by $M'_\delta(\phi_\delta)$, which provides a uniform $L^\infty(\Omega)$ bound for $M'_\delta(\phi_\delta)/\sqrt{M_\delta(\phi_\delta)}$.
It follows from \eqref{27} and dominated convergence that
\begin{align}
  \frac{M'_\delta(\phi_\delta)}{\sqrt{M_\delta(\phi_\delta)}}
  \nabla\phi_\delta\rightarrow
  \frac{M'(\phi)}{\sqrt{M(\phi)}}\nabla\phi
  \quad\mbox{strongly in } L^2(\Omega_T). \label{35}
\end{align}

According to \eqref{16}, there exists $I\in L^2(\Omega_T)$ such that
\begin{align*}
  \sqrt{M_\delta(\phi_\delta)}\big(\nabla \mu_\delta-c_\delta\nabla\partial_cf_\delta(\phi_\delta,c_\delta) \big)\rightharpoonup I\quad\mbox{weakly in } L^2(\Omega_T). 
\end{align*}
The final step of the proof is concerned with the identification of the limit $I$.

\subsubsection*{Step 4: Identification of $I$}

Let $\Psi\in L^2(0,T;H^{1}(\Omega;\R^d))\cap L^\infty(\Omega_T;\R^d)$
with $\Psi\cdot\nu=0$ on $\partial\Omega$ be given. We compute
\begin{align}\label{33}
  \int_0^T&\int_\Omega \sqrt{M_\delta (\phi_\delta)}
  \big(\nabla\mu_\delta - c_\delta\nabla\partial_cf_\delta
  (\phi_\delta,c_\delta) \big)\cdot\Psi dxdt \\
  &= \int_0^T\int_\Omega \Delta\phi_\delta\diver\big(
  \sqrt{M_\delta (\phi_\delta)}\Psi\big)dxdt
  + \int_0^T\int_\Omega \sqrt{M_\delta (\phi_\delta)}\nabla
  \partial_\phi f_\delta(\phi_\delta,c_\delta)\cdot\Psi dxdt \nonumber \\
  &\phantom{xx}- \int_0^T\int_\Omega
  \sqrt{M_\delta (\phi_\delta)}c_\delta (\nabla c_\delta
  - \nabla\phi_\delta)\cdot\Psi dxdt =: I_1+I_2+I_3. \nonumber
\end{align}

The term $I_1$ can be divided into two parts
\begin{align*}
  I_1 &= \int_0^T\int_\Omega \frac{M'_\delta(\phi_\delta)}{
  2\sqrt{M_\delta(\phi_\delta)}}\nabla\phi_\delta
  \Delta\phi_\delta\cdot\Psi dxdt
  + \int_0^T\int_\Omega \sqrt{M_\delta (\phi_\delta)}
  \Delta\phi_\delta\diver\Psi dxdt \\
  &=: I_{11}+I_{12}.
\end{align*}
It follows from convergences \eqref{a26}, \eqref{27}, and \eqref{35} that
\begin{align*}
  I_{11}\rightarrow \int_0^T\int_\Omega \frac{M'(\phi)}{2\sqrt{M(\phi)}}\nabla\phi\Delta\phi\cdot\Psi dxdt,
\end{align*}
and convergences \eqref{a26} and \eqref{31} yield
\begin{align*}
  I_{12}\rightarrow \int_0^T\int_\Omega \sqrt{M(\phi)}\Delta\phi {\rm{div}} \Psi dxdt.
\end{align*}
Summarizing, this shows that
\begin{align*}
  I_1\rightarrow \int_0^T\int_\Omega \Delta\phi\diver\big(\sqrt{M (\phi)}\Psi\big)dxdt.
\end{align*}

We turn to the term $I_2$. By the definition of $f_\delta$, we have $\nabla\partial_\phi f_{\delta}(\phi_\delta,c_\delta) = \nabla\phi_\delta/\sqrt{M_\delta(\phi_\delta)}
- \nabla c_\delta-2\nabla\phi_\delta$,
which gives
\begin{align*}
  I_2 &= \int_0^T\int_\Omega \nabla\phi_\delta\cdot\Psi dxdt
  + \int_0^T\int_\Omega \frac{M'_\delta(\phi_\delta)}{
  2\sqrt{M_\delta(\phi_\delta)}}\nabla\phi_\delta c_\delta
  \cdot\Psi dxdt \\
  &\phantom{xx}+ \int_0^T\int_\Omega \sqrt{M_\delta (\phi_\delta)}c_\delta\diver
  \Psi  dxdt - 2\int_0^T\int_\Omega \sqrt{M_\delta(\phi_\delta)}
  \nabla \phi_\delta\cdot\Psi dxdt \\
  &=: I_{21}+I_{22}+I_{23}+I_{24}.
\end{align*}
By the convergences \eqref{27}, \eqref{28}, \eqref{31}, and \eqref{35},
\begin{align*}
  I_{21} &\rightarrow \int_0^T\int_\Omega \nabla\phi\cdot\Psi dxdt,
  && I_{22}\rightarrow \int_0^T\int_\Omega \frac{M'(\phi)}{2\sqrt{M(\phi)}}\nabla\phi c\cdot\Psi dxdt, \\
  I_{23} &\rightarrow \int_0^T\int_\Omega \sqrt{M(\phi)}c \diver\Psi dxdt,
  && I_{24}\rightarrow -2\int_0^T\int_\Omega \sqrt{M(\phi)}
  \nabla \phi\cdot\Psi dxdt.
\end{align*}
Therefore,
\begin{align*}
  I_2\rightarrow \int_0^T\int_\Omega \big(\nabla\phi-2\sqrt{M(\phi)}\nabla\phi\big)\cdot\Psi dxdt
  + \int_0^T\int_\Omega c\diver\big(\sqrt{M(\phi)}\Psi\big) dxdt.
\end{align*}

It follows from convergences \eqref{27}, \eqref{28}, \eqref{29}, and \eqref{31} that
\begin{align*}
  I_3&=-\int_0^T\int_\Omega \big(\sqrt{M_\delta(\phi_\delta)}\sqrt{c_\delta}
  (\sqrt{c_\delta} \nabla c_\delta)
  - \sqrt{M_\delta(\phi_\delta)} c_\delta\nabla\phi_\delta\big)
  \cdot\Psi dxdt \\
  &\rightarrow -\int_0^T\int_\Omega \sqrt{M(\phi)}c
  (\nabla c-\nabla\phi)\cdot\Psi dxdt \\
  &=-\int_0^T\int_\Omega \sqrt{M(\phi)}c\nabla\partial_c f(\phi,c)
  \cdot \Psi dxdt.
\end{align*}
Inserting the previous convergence results for $I_1$, $I_2$, and $I_3$ into \eqref{33}, we conclude that
\begin{align*}
  \int_0^T\int_\Omega I\cdot\Psi dxdt
  &= \int_0^T\int_\Omega \big[\Delta\phi{\rm{div}}\big(\sqrt{M(\phi)}\Psi\big)
  + \big(\nabla\phi-2\sqrt{M(\phi)}\nabla\phi\big)\cdot\Psi \\
  &\phantom{xx} + c\diver\big(\sqrt{M(\phi)}\Psi\big)
  - \sqrt{M(\phi)}c\nabla\partial_cf(\phi,c)\cdot\Psi \big]dxdt,
\end{align*}
finishing the proof of Theorem \ref{main}.


\end{document}